\documentclass[12pt]{amsart}

\usepackage{amsmath}
\usepackage{amstext}
\usepackage{amsfonts}
\usepackage{amssymb, dsfont}
\usepackage{amsthm}
\usepackage{amsrefs}
\usepackage{mathtools}
\usepackage{color}
\usepackage{graphicx}
\usepackage[colorlinks=true, allcolors=blue]{hyperref}
\usepackage{float}
\usepackage{MnSymbol}
\allowdisplaybreaks
\usepackage{tikz-cd}
\usepackage{amsbsy}
\usepackage{wasysym}
\usepackage{amscd}
\usetikzlibrary{matrix,arrows,decorations.pathmorphing}
\usepackage{enumitem}
\usepackage{setspace}

\usepackage[english]{babel}
\usepackage[utf8x]{inputenc}
\usepackage[T1]{fontenc}

\usepackage[margin=1.0in]{geometry}

\newtheorem{thm}{Theorem}[section]
\newtheorem{lem}[thm]{Lemma}
\newtheorem{cor}[thm]{Corollary}
\newtheorem{prop}[thm]{Proposition}

\theoremstyle{definition}
\newtheorem{example}[thm]{Example}
\theoremstyle{definition}

\theoremstyle{definition}

\theoremstyle{definition}
\newtheorem{remark}[thm]{Remark}

\newcommand{\LG}{VN(G)}

\newcommand{\LO}{L^1(G)}
\newcommand{\LOQ}{L^1(\mathbb{G})}

\newcommand{\LOQH}{L^1(\widehat{\mathbb{G}})}

\newcommand{\LT}{L^2(G)}

\newcommand{\LTQ}{L^2(\mathbb{G})}
\newcommand{\LTQH}{L^2(\widehat{\mathbb{G}})}

\newcommand{\LI}{L^{\infty}(G)}
\newcommand{\LIQ}{L^{\infty}(\mathbb{G})}

\newcommand{\LIQH}{L^{\infty}(\widehat{\mathbb{G}})}

\newcommand{\LIQHP}{L^{\infty}(\widehat{\mathbb{G}}')}

\newcommand{\BLT}{\mc{B}(L^2(G))}

\newcommand{\BLTQ}{\mc{B}(L^2(\mathbb{G}))}

\newcommand{\TC}{\mc{T}(L^2(G))}

\newcommand{\TCQ}{\mc{T}(L^2(\mathbb{G}))}

\newcommand{\McbQHr}{M_{cb}^r(L^1(\widehat{\mathbb{G}}))}

\newcommand{\McbQr}{M_{cb}^r(L^1(\mathbb{G}))}

\newcommand{\ten}{\otimes}
\newcommand{\oten}{\overline{\otimes}}
\newcommand{\pten}{\widehat{\otimes}}

\newcommand{\Nphi}{\mc{N}_\varphi}
\newcommand{\Mphi}{\mc{M}_\varphi}
\newcommand{\Mphip}{\mc{M}_{\varphi}^+}

\newcommand{\vphi}{\varphi}

\newcommand{\G}{\mathbb{G}}

\newcommand{\C}{\mathbb{C}}

\newcommand{\R}{\mathbb{R}}

\DeclareSymbolFont{lettersA}{U}{txmia}{m}{it}
\DeclareMathSymbol{\W}{\mathord}{lettersA}{151}

\newcommand{\Lphi}{\Lambda_\varphi}
\newcommand{\Lphit}{\Lambda_{\varphi\otimes\varphi}}

\newcommand{\lm}{\lambda}

\newcommand{\Gam}{\Gamma}
\newcommand{\om}{\omega}

\newcommand{\Ad}{\mathrm{Ad}}

\newcommand{\id}{\textnormal{id}}

\providecommand{\norm}[1]{\lVert#1\rVert}

\newcommand{\h}[1]{\widehat{#1}}
\newcommand{\wh}[1]{\widehat{#1}}

\newcommand{\mc}[1]{\mathcal{#1}}
\newcommand{\e}[1]{\emph{#1}}

\newcommand{\la}{\langle}
\newcommand{\ra}{\rangle}
\newcommand{\tr}{\mathrm{tr}}
\newcommand{\rmv}[1]{}

\newcommand{\hs}{\hskip10pt}

\begin{document}

\title[]{A Banach algebra encoding quantum group duality}
\author{Jason Crann$^1$}
\email{jasoncrann@cunet.carleton.ca}

\author{Matthias Neufang$^{1,2}$}
\email{matthias.neufang@carleton.ca}
\address{$^1$School of Mathematics and Statistics, Carleton University, Ottawa, ON, Canada H1S 5B6}
\address{$^2$Universit\'{e} Lille 1 - Sciences et Technologies, UFR de Math\'{e}matiques, Laboratoire de Math\'{e}\-matiques Paul Painlev\'{e} - UMR CNRS 8524, 59655 Villeneuve d'Ascq C\'{e}dex, France}

\keywords{Locally compact quantum groups, completely bounded multipliers}
\subjclass[2010]{46L67, 46L07; Secondary 46L89, 43A99.}

\begin{abstract} We introduce and study a new Banach algebra structure on the trace-zero subspace $\TCQ_0$ of trace class operators for any locally compact quantum group $\G$; it is defined through a mixed Lie-type product $\ostar$ of the two dual products on $\TCQ$ arising from the canonical extensions of the co-products of $\G$ and $\wh{\G}$. The surprising fact that this new product is indeed associative stems precisely from the duality of the latter two products. This, in particular, gives new faithful associative products on trace-zero matrices in $M_d(\C)$. After establishing some basic properties of $\ostar$, we show that the single algebra $(\TCQ_0,\ostar)$ captures simultaneous properties of $\G$ and $\wh{\G}$, is faithful for a large class of quantum groups, and encodes both $\McbQr$ and $\McbQHr$ as left, respectively right, completely bounded module maps on $\TCQ$. We finish by exhibiting an analogous product on the trace-zero nuclear operators $\mc{N}(L^p(G))_0$ for a locally compact group $G$ and $p\in(1,\infty)$. Building on \cite{DS}, our work suggests an approach for developing an $L^p$-version of locally compact quantum group theory.
\end{abstract}

\begin{spacing}{1.0}

\maketitle

\section{Introduction}

For any locally compact quantum group $\G$, the preduals of the canonical extensions of the (right) co-products on $\LIQ$ and $\LIQH$ yield two Banach algebra structures on the trace class $\TCQ$. These products, denoted here by $\star$ and $\bullet$, lift the convolution products of $\LOQ$ and $\LOQH$, respectively. They have been studied from a variety of perspectives, including Banach algebra homology \cite{C,C2,CN,KN,P}, completely bounded multipliers \cite{HNR2,HNR3,N}, quantum group regularity \cite{HNR4} and topological centres \cite{HNR, HNR5}. 

While $\star$ and $\bullet$ capture fundamental properties of $\G$ and $\wh{\G}$, respectively, the products themselves are degenerate in the sense that $\rho\star\tau=0$ for any $\tau\in\LIQ_{\perp}$ and $\rho\in\TCQ$ (similarly for $\bullet$). In this paper we show, surprisingly, that the following mixed Lie-type product
$$\rho\ostar\tau := \frac{1}{2}(\rho\star\tau - \tau\bullet\rho)$$
yields an associative (!), completely contractive Banach algebra structure on the subspace $\TCQ_0$ of trace-zero elements of $\TCQ$. We show that for a large class of locally compact quantum groups $\G$, the product $\ostar$ is (left and right) faithful and that simultaneous properties of both $\G$ and $\wh{\G}$ are captured by the single algebra $(\TCQ_0,\ostar)$. For instance, we show in Theorem \ref{t:bai} that $(\TCQ_0,\ostar)$ has a bounded approximate identity if and only if both $\G$ and $\wh{\G}$ are co-amenable.

The compatibility of the products $\star$ and $\bullet$ that ensures the associativity of $\ostar$ is mutual commutativity on $\TCQ_0$ (see Proposition \ref{p:general} for a precise meaning). We show that instances of this mutual commutativity occur beyond the Hilbert space setting by establishing an analogous product to $\ostar$ on the trace-zero nuclear operators $\mc{N}(L^p(G))_0$ for a locally compact group $G$ when $p\in (1,\infty)$ (see section \ref{s:last}). This result utilizes and builds on techniques developed by Neufang \cite{N} and Daws-Spronk \cite{DS}.  

For any locally compact quantum group $\G$, it is well-established that both $\McbQr$ and $\McbQHr$ are represented completely isometrically in $\mc{CB}(\TCQ)$ as completely bounded left $\star$, respectively $\bullet$ module maps. Specifically,
$$\McbQr\cong \ _{\star}\mc{CB}(\TCQ) \ \ \textnormal{and} \ \ \McbQHr\cong \ _{\bullet}\mc{CB}(\TCQ)$$
anti-multiplicatively \cite[Theorem 4.1]{HNR4}. We show that $\TCQ$ becomes a $(\TCQ_0,\ostar)$-bimodule in the canonical manner, and that for a large class of quantum groups $\G$, the image of the above representations induce completely isometric isomorphisms
$$\McbQr\cong \ _{\ostar}\mc{CB}(\TCQ)\ \ \textnormal{and}  \ \ \McbQHr\cong \ \mc{CB}(\TCQ)_{\ostar}.$$
In other words, the completely bounded multipliers of $\LOQ$ and $\LOQH$ can be viewed as the left, respectively right, completely bounded module maps of $\TCQ$ under the \textit{same product} (see Theorem \ref{t:multipliers} for a precise statement).

The structure of the paper is as follows. After reviewing necessary preliminaries in section 2, we introduce the Banach algebra $(\TCQ_0,\ostar)$ and study some of its basic properties in section 3. We characterize the existence of a bounded approximate identity in section 4, and establish the aforementioned representations of completely bounded multipliers in section 5. In section 6, we study analogous products to $\ostar$ on the trace zero nuclear opeators $\mc{N}(L^p(G))_0$ for a locally compact group $G$ and $p\in(1,\infty)$.

\section{Preliminaries}

\subsection{Completely Contractive Banach Algebras}

A completely contractive Banach algebra $A$ is a Banach algebra for which multiplication extends to a complete contraction $m_A:A\pten A\rightarrow A$ with respect to the operator space projective tensor product $\pten$. Equivalently,
$$\norm{[a_{i,j}b_{k,l}]}_{mn}\leq\norm{[a_{i,j}]}_{m}\norm{[b_{k,l}]}_n, \ \ \ [a_{i,j}]\in M_m(A), \ [b_{k,l}]\in M_n(A).$$
An operator space $X$ is a left \textit{operator $A $-module} if it is a left Banach $ A  $-module such that the module map $m_X:A\pten X\rightarrow X$ is completely contractive. We say that $X$ is \textit{faithful} if for every non-zero $x\in X$, there is $a\in A  $ such that $a\cdot x\neq 0$, and we say that $X$ is \e{essential} if $\la A\cdot X \ra=X$, where $\la\cdot\ra$ denotes the closed linear span.

The operator space dual $X^*$ of any left operator $A$-module becomes a right operator $A$-module in the canonical fashion:
\begin{equation}\label{e:dualaction}\la f\cdot a, x\ra=\la f, a\cdot x\ra, \ \ \ f\in X^*, \ a\in A, \ x\in X.\end{equation}
Similarly for right modules.

\subsection{Locally Compact Quantum Groups} A \e{locally compact quantum group} is a quadruple $\G=(\LIQ,\Gam,\vphi,\psi)$, where $\LIQ$ is a Hopf-von Neumann algebra with co-multiplication $\Gam:\LIQ\rightarrow\LIQ\oten\LIQ$, and $\vphi$ and $\psi$ are fixed left and right Haar weights on $\LIQ$, respectively \cite{KV1,KV2}. Throughout the paper we adopt the following standard notations from weight theory
$$\Mphip:=\{x\in \LIQ^+ : \vphi(x)<\infty\}, \ \ \ \Nphi:=\{x\in\LIQ : \vphi(x^*x)<\infty\}.$$ 
It is well-known that $\Nphi$ is a left ideal in $\LIQ$, and that $\Mphi:=\text{span}\{x^*y : x,y\in\Nphi\}=\text{span} \ \Mphip$ is a *-subalgebra of $\LIQ$ \cite{T2}. Similarly for the right Haar weight $\psi$.

For every locally compact quantum group $\G$, there exists a \e{left fundamental unitary operator} $W$ on $L^2(\G,\vphi)\ten L^2(\G,\vphi)$ and a \e{right fundamental unitary operator} $V$ on $L^2(\G,\psi)\ten L^2(\G,\psi)$ implementing the co-multiplication $\Gam$ via
\begin{equation*}\Gam(x)=W^*(1\ten x)W=V(x\ten 1)V^*, \ \ \ x\in\LIQ.\end{equation*}
Both unitaries satisfy the \e{pentagonal relation}; that is,
\begin{equation}\label{e:pentagonal}W_{12}W_{13}W_{23}=W_{23}W_{12}\hs\hs\mathrm{and}\hs\hs V_{12}V_{13}V_{23}=V_{23}V_{12}.\end{equation}

For simplicity we write $\LTQ$ for $L_2(\G,\vphi)$ throughout the paper. There is a strictly positive operator $\delta$ affiliated with $\LIQ$, called the \textit{modular element}, satisfying $\psi(x)=\vphi(\delta^{1/2}x\delta^{1/2})$, $x\in \mc{M}_\psi$. 

Let $\LOQ$ denote the predual of $\LIQ$. Then the pre-adjoint of $\Gam$ induces an associative completely contractive multiplication on $\LOQ$, defined by
\begin{equation*}\star:\LOQ\pten\LOQ\ni f\ten g\mapsto f\star g=\Gam_*(f\ten g)\in\LOQ.\end{equation*}
The multiplication $\star$ is a complete quotient map from $\LOQ\pten\LOQ$ onto $\LOQ$, implying
\begin{equation*}\la\LOQ\star\LOQ\ra=\LOQ,\end{equation*}
where as above, $\la\cdot\ra$ denotes closed linear span. 


The canonical $\LOQ$-bimodule structure on $\LIQ$ is given by
\begin{equation*}f\star x=(\id\ten f)\Gam(x)\hs\hs\mathrm{and}\hs\hs x\star f=(f\ten\id)\Gam(x)\end{equation*}
for $x\in\LIQ$, and $f\in\LOQ$. A \e{left invariant mean on $\LIQ$} is a state $m\in \LIQ^*$ satisfying
\begin{equation*}\la m,x\star f \ra=\la f,1\ra\la m,x\ra, \ \ \ x\in\LIQ, \ f\in\LOQ.\end{equation*}
Right and two-sided invariant means are defined similarly. A locally compact quantum group $\G$ is said to be \e{amenable} if there exists a left invariant mean on $\LIQ$. It is known that $\G$ is amenable if and only if there exists a right (equivalently, two-sided) invariant mean. We say that $\G$ is \e{co-amenable} if $\LOQ$ has a bounded left (equivalently, right or two-sided) approximate identity. We say that $\G$ is \e{discrete} if $\LOQ$ is unital, in which case we denote $\LOQ$ by $\ell^1(\G)$.

For general $\G$, the \e{left regular representation} $\lm:\LOQ\rightarrow\BLTQ$ is defined by
\begin{equation*}\lm(f)=(f\ten\id)(W), \ \ \ f\in\LOQ,\end{equation*}
and is an injective, completely contractive homomorphism from $\LOQ$ into $\BLTQ$. Then $\LIQH:=\{\lm(f) : f\in\LOQ\}''$ is the von Neumann algebra associated with the dual quantum group $\h{\G}$. Analogously, we have the \e{right regular representation} $\rho:\LOQ\rightarrow\BLTQ$ defined by
\begin{equation*}\rho(f)=(\id\ten f)(V), \ \ \ f\in\LOQ,\end{equation*}
which is also an injective, completely contractive homomorphism from $\LOQ$ into $\BLTQ$. Then $\LIQHP:=\{\rho(f) : f\in\LOQ\}''$ is the von Neumann algebra associated to the quantum group $\h{\G}'$. It follows that $\LIQHP=\LIQH'$ and $\LIQ\cap\LIQH=\LIQ\cap\LIQHP=\C1$ \cite[Proposition 3.4]{VD}. The left and right fundamental unitaries of the dual $\h{\G}$ are given by
\begin{equation}\label{e:hat}\h{W}=\sigma W^*\sigma, \ \ \ \h{V}=(J\ten J)W(J\ten J),\end{equation}
where $\sigma:\LTQ\ten\LTQ\rightarrow\LTQ\ten\LTQ$ is the flip map and $J$ is the modular conjugation induced by the left Haar weight $\vphi$. The GNS Hilbert space of the dual left Haar weight $\h{\vphi}$ on $\LIQH$ can be identified with $\LTQ$. In this case, the product of the modular conjugations arising from the left Haar weights $\vphi$ and $\h{\vphi}$ defines a unitary $U:=\h{J}J$ on $\LTQ$ intertwining the left and right regular representations via $\rho(f)=U\lm(f)U^*$, $f\in\LOQ$. At the level of fundamental unitaries, this relation becomes
\begin{equation}\label{e:WV}V=\sigma(1\ten U)W(1\ten U^*)\sigma.\end{equation}
Equations (\ref{e:WV}) and (\ref{e:hat}) allow us to interpret the pentagonal relation as a commutation relation between $\G$ and $\h{\G}$:
\begin{equation}\label{e:commutation}\h{W}_{23}W_{13}=W_{13}\h{W}_{23}W_{12}, \ \ \ \h{V}_{12}V_{13}=V_{13}\h{V}_{12}\h{W}_{23}^*.\end{equation}

If $G$ is a locally compact group, we let $\G_a=(\LI,\Gam_a,\vphi_a,\psi_a)$ denote the \e{commutative} quantum group associated with the commutative von Neumann algebra $\LI$, where the co-multiplication is given by $\Gam_a(f)(s,t)=f(st)$, and $\vphi_a$ and $\psi_a$ are integration with respect to a left and right Haar measure, respectively. The fundamental unitaries in this case are given by
\begin{equation}\label{e:W_a}W_a\xi(s,t)=\xi(s,s^{-1}t), \ \ V_a\xi(s,t)=\xi(st,t)\Delta(t)^{1/2}, \ \ \ \xi\in L^2(G\times G).\end{equation}
The dual $\h{\G}_a$ of $\G_a$ is the \e{co-commutative} quantum group $\G_s=(\LG,\Gam_s,\vphi_s,\psi_s)$, where $\LG$ is the left group von Neumann algebra with co-multiplication $\Gam_s(\lm(t))=\lm(t)\ten\lm(t)$, and $\vphi_s=\psi_s$ is Haagerup's Plancherel weight. Then $L^1(\G_a)$ is the usual group convolution algebra $\LO$, and $L^1(\G_s)$ is the Fourier algebra $A(G)$. It is known that every commutative locally compact quantum group is of the form $\G_a$ \cite[Theorem 2]{T}. By duality, every co-commutative locally compact quantum group is of the form $\G_s$.

For general $\G$, we let $C_0(\G):=\overline{\hat{\lm}(\LOQH)}^{\norm{\cdot}}$ denote the \e{reduced quantum group $C^*$-algebra} of $\G$. Here,
$$\hat{\lm}(\hat{f})=(\hat{f}\ten\id)(\wh{W}), \ \ \ \hat{f}\in\LOQH,$$
where $\wh{W}=\sigma W^*\sigma$ is the left fundamental unitary of $\wh{\G}$, and $\sigma$ is the flip map on $\LTQ\ten\LTQ$. The operator dual $M(\G):=C_0(\G)^*$ is a completely contractive Banach algebra containing $\LOQ$ as a norm closed two-sided ideal via the map $\LOQ\ni f\mapsto f|_{C_0(\G)}\in M(\G)$.

For a locally compact quantum group $\G$, we let $R$ and $(\tau_t)_{t\in\R}$ denote the \textit{unitary antipode} and \textit{scaling group} of $\G$, respectively. The unitary antipode satisfies
\begin{equation}\label{e:antipode} (R\ten R)\circ\Gam = \Sigma\circ\Gamma\circ R,\end{equation}
where $\Sigma:\LIQ\oten\LIQ\rightarrow\LIQ\oten\LIQ$ denotes the flip map. The \textit{antipode} of $\G$ is $S=R\tau_{-\frac{i}{2}}$, and is a closed densely defined operator on $\LIQ$, whose domain we denote by $\mc{D}(S)$. 
A quantum group $\G$ is said to be a \textit{Kac algebra} if $S=R$, and the modular element $\delta$ is affiliated to the center of $\LIQ$. In this case $\h{\G}$ is also a Kac algebra.

\section{Dual Products on the Trace Class}

Let $\G=(\LIQ,\Gamma,\vphi,\psi)$ be a locally compact quantum group. The right fundamental unitary $V$ of $\G$ induces a co-associative co-multiplication
\begin{equation*}\Gam^r:\BLTQ\ni T\mapsto V(T\ten 1)V^*\in\BLTQ\oten\BLTQ,\end{equation*}
and the restriction of $\Gam^r$ to $\LIQ$ yields the original co-multiplication $\Gam$ on $\LIQ$. The pre-adjoint of $\Gam^r$ induces an associative completely contractive multiplication on $\TCQ$, defined by
\begin{equation*}\star:\TCQ\pten\TCQ\ni\om\ten\tau\mapsto\om\star\tau=\Gam^r_*(\om\ten\tau)\in\TCQ.\end{equation*}
Since $\Gam^r$ is a complete isometry, it follows that $\Gam_*^r$ is a complete quotient map, so we have
\begin{equation}\label{e:densityofproducts}\TCQ=\la\TCQ\star\TCQ\ra.\end{equation}
It is known that $(\TCQ,\star)$ is always left faithful, and right faithful if and only if $\G$ is trivial (cf. \cite{HNR2}). By \cite[Lemma 5.2]{HNR2}, the pre-annihilator $\LIQ_{\perp}$ of $\LIQ$ in $\TCQ$ is a norm closed two sided ideal in $(\TCQ,\star)$ and the complete quotient map
\begin{equation}\label{pi}\pi:\TCQ\ni\om\mapsto f=\om|_{\LIQ}\in\LOQ\end{equation}
is a completely contractive algebra homomorphism from $(\TCQ,\star)$ onto $\LOQ$. Therefore, we have the completely isometric Banach algebra identification
\begin{equation*}(\LOQ,\star)\cong(\TCQ,\star)/\LIQ_{\perp}.\end{equation*}
This allows us to view $(\TCQ,\star)$ as a lifting of $(\LOQ,\star)$.


\begin{example}\label{ex:1} Let $\G_a=(\LI,\Gam_a,\vphi_a,\psi_a)$ be a commutative locally compact quantum group. Then, as shown in \cite{N}, we have
$$\om\star_a\tau=\int_G\rho(s)^*\om\rho(s)\pi_a(\tau)(s)$$
for any $\om,\tau\in\TC$. Hence, in the commutative case we may view $\star_a$ as a lifting of convolution from $\LO$ to $\TC$.
\end{example}

\begin{example}\label{ex:2} Let $\G_s=(\LG, \Gam_s,\vphi_s)$ be a co-commutative quantum group. By equations (\ref{e:hat}) and (\ref{e:W_a}), it follows that the right fundamental unitary $V_s=W_a$. Then, for any $\rho,\tau\in\TCQ$ and $T\in\BLTQ$
\begin{align*}\la\rho\star_s\tau,T\ra&=\la\rho\ten\tau,V_s(T\ten 1)V_s^*\ra\\
&=\la\rho\ten\tau,W_a(T\ten 1)W_a^*\ra\\
&=\la W_a^*(\rho\ten\tau)W_a,T\ten 1\ra\\
&=\la (\id\ten\tr)W_a^*(\rho\ten\tau)W_a,T\ra.
\end{align*}
Suppose that $G$ is discrete. Then using the canonical orthonormal basis $(\delta_s)_{s\in G}$ of $\ell^2(G)$, we see that
\begin{align*}\la(\rho\star_s\tau)\delta_t,\delta_s\ra&=\sum_{r\in G}\la W_a^*(\rho\ten\tau)W_a\delta_t\ten\delta_r,\delta_s\ten\delta_r\ra\\
&=\sum_{r\in G}\la (\rho\ten\tau)W_a\delta_t\ten\delta_r,W_a\delta_s\ten\delta_r\ra\\
&=\sum_{r\in G}\la (\rho\ten\tau)\delta_t\ten\delta_{tr},\delta_s\ten\delta_{sr}\ra\\
&=\la\rho\delta_t,\delta_s\ra \sum_{r\in G}\la\tau\delta_{tr},\delta_{sr}\ra\\
&=\la\rho\delta_t,\delta_s\ra \sum_{r\in G}\la\lm(s^{-1})\tau\lm(t)\delta_{r},\delta_{r}\ra\\
&=\la\rho\delta_t,\delta_s\ra \tr(\lm(s^{-1})\tau\lm(t))\\
&=\la\rho\delta_t,\delta_s\ra \tr(\tau\lm(ts^{-1}))\\
&=\la([\tr(\tau\lm(ts^{-1}))]_{s,t}\circ_S\rho)\delta_t,\delta_s\ra,
\end{align*}
where $\circ_S$ is the (entry-wise) Schur product.
\end{example}

Since $\LTQ=\LTQH$ for any locally compact quantum group $\G$, applying the above construction to the co-multiplication $\h{\Gam}$ on $\LIQH$ yields a \e{dual} product
$$\bullet:\TCQ\pten\TCQ\ni\om\ten\tau\mapsto\om\bullet\tau=\h{\Gam}^r_*(\om\ten\tau)\in\TCQ.$$

Lifting convolution from both $\LOQ$ and $\LOQH$ to $\TCQ$ allows one to study properties of $\G$ and $\widehat{\G}$ as well as their interactions a \e{single space}. Two important interactions were shown in \cite{KN}, which will be used to establish associativity of our Lie-type product.

\begin{prop}\cite{KN} Let $\G$ be a locally compact quantum group. Then for every $\rho,\om,\tau\in\TCQ$ we have
\begin{equation}\label{e:dr}\rho \star (\om\bullet \tau)=\tau(1)\rho \star \om, \quad \rho\bullet(\om\star\tau)=\tau(1)\rho\bullet\om,\end{equation}
and
\begin{equation}\label{e:cr}(\rho \star \om)\bullet \tau=(\rho \bullet \tau) \star \om.\end{equation}
\end{prop}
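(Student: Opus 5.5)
We dualize each identity and verify it at the level of operators on $\LTQ\ten\LTQ\ten\LTQ$. Write $\Gam^r(T)=V(T\ten 1)V^*$ and $\h{\Gam}^r(T)=\h{V}(T\ten 1)\h{V}^*$ for $T\in\BLTQ$. By definition $\la\rho\star\om,T\ra=\la\rho\ten\om,\Gam^r(T)\ra$, and similarly for $\bullet$. Iterating gives
$$\la\rho\star(\om\bullet\tau),T\ra=\la\rho\ten\om\ten\tau,(\id\ten\h{\Gam}^r)\Gam^r(T)\ra .$$
So each of the three identities reduces to an operator equation between iterated co-multiplications. The key tools are the commutation relations (\ref{e:commutation}), $V\in\LIQ'\oten\LIQ$ (or its appropriate leg version), and $\h{V}\in\LIQH'\oten\LIQH$.

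For the first identity in (\ref{e:dr}), we must show $(\id\ten\h{\Gam}^r)\Gam^r(T)=\Gam^r(T)\ten 1$. The plan is to observe that the second leg of $\Gam^r(T)=V(T\ten1)V^*$ lies in $\LIQ$, since $V\in\LIQH'\oten\LIQ$. We then check that $\h{\Gam}^r$ acts on $\LIQ$ trivially: $\h{\Gam}^r(x)=x\ten 1$ for $x\in\LIQ$. This holds because $\h{V}\in\LIQH'\oten\LIQH$, and the first leg of $\h{V}$ lies in $\LIQH'$, which commutes with $\LIQ$ only after the correct identification. Concretely, using (\ref{e:hat}), $\h{V}=(J\ten J)W(J\ten J)$ with $W\in\LIQ\oten\LIQH$, so $\h{V}\in J\LIQ J\oten J\LIQH J=\LIQ'\oten\LIQH'$. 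Hence $\h{V}(x\ten1)\h{V}^*=x\ten1$ for $x\in\LIQ$. Applying this slice-wise in the second leg gives the claim, and pairing with $\rho\ten\om\ten\tau$ produces the factor $\tau(1)$. The second identity in (\ref{e:dr}) is symmetric: $\Gam^r(\hat x)=\hat x\ten 1$ for $\hat x\in\LIQH$, because $V\in\LIQH'\oten\LIQ$ with first leg in $\LIQH'$. The second leg of $\h{\Gam}^r(T)$ lies in $\LIQH$, so the same argument applies.

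For (\ref{e:cr}), the left side dualizes to $(\Gam^r\ten\id)\h{\Gam}^r(T)$ paired with $\rho\ten\om\ten\tau$. The right side dualizes to $\Sigma_{23}(\h{\Gam}^r\ten\id)\Gam^r(T)$, because $\om$ and $\tau$ occupy swapped slots. Writing these out, we must show
$$V_{12}\h{V}_{13}T_1\h{V}_{13}^*V_{12}^*=\h{V}_{13}V_{12}T_1V_{12}^*\h{V}_{13}^*,$$
and this follows if $V_{12}$ commutes with $\h{V}_{13}$ up to a factor acting trivially on $T_1$. This is where (\ref{e:commutation}) enters. The relation $\h{V}_{12}V_{13}=V_{13}\h{V}_{12}\h{W}_{23}^*$, after relabeling legs $2\leftrightarrow3$, gives $\h{V}_{13}V_{12}=V_{12}\h{V}_{13}\h{W}_{32}^*$. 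The extra unitary $\h{W}_{32}^*$ acts only on legs $2,3$ and so commutes with $T_1$. Conjugating then yields the identity.

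The main obstacle is bookkeeping rather than any deep step. The work lies in getting the leg placements of $V,\h{V}$ right and pinning down exactly which commutant each leg belongs to, since the conventions for $W$, $V$, $U$, and $J$ vary. Rather than trust a remembered formula, I would rederive the leg memberships from (\ref{e:WV}) and (\ref{e:hat}). I would also double-check the relabeled form of (\ref{e:commutation}) by conjugating with the flip $\sigma_{23}$.
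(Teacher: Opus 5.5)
Your argument is correct in substance and follows the paper's route. The paper only cites \cite{KN} for this statement, but in Section~\ref{s:last} it proves the $L^p$ analogues the same way. It dualizes, then uses $\h V_{23}V_{12}=V_{12}\h V_{23}$ for the first identity in (\ref{e:dr}) and $V_{12}\h V_{13}=\h V_{13}V_{12}\h V_{23}$ for (\ref{e:cr}). In both cases the extra factor lives on legs $2,3$ and so does not see $T\ten 1\ten 1$.

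You do need to fix one bookkeeping error. $J\LIQH J=\LIQH$, not $\LIQH'$, because $J$ implements the unitary antipode of $\h\G$. So the correct memberships are $V\in\LIQH'\oten\LIQ$ and $\h V\in\LIQ'\oten\LIQH$, not $V\in\LIQ'\oten\LIQ$, $\h V\in\LIQH'\oten\LIQH$ or $\h V\in\LIQ'\oten\LIQH'$ as you variously write. The two legs of $\h V$ each do a specific job:
\begin{itemize}
\item The first leg lying in $\LIQ'$ is what gives $\h\Gam^r(x)=x\ten 1$ for $x\in\LIQ$.
\item The second leg lying in $\LIQH$ is exactly what your proof of $\rho\bullet(\om\star\tau)=\tau(1)\rho\bullet\om$ needs. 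Your computed membership $\LIQ'\oten\LIQH'$ would not support that step.
\end{itemize}
With this correction, all three identities go through as you describe.

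In your relabelled commutation relation, it does not matter whether the trailing factor is $\h W_{32}$ or $\h W_{32}^*$, since you only use that it is supported on legs $2,3$. In fact, a direct check in the commutative case, using $V\xi(s,t)=\Delta(t)^{1/2}\xi(st,t)$, $\h V\xi(s,t)=\xi(s,s^{-1}t)$ and $\h W\xi(s,t)=\xi(ts,t)$, gives $\h V_{12}V_{13}=V_{13}\h V_{12}\h W_{23}$. So (\ref{e:commutation}) as printed appears to place the adjoint on the wrong factor. This corrected form is consistent with the $p=2$ case of the relation in Section~\ref{s:last}.
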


Note that the trace $\tr:\TCQ\rightarrow\C$ is multiplicative with respect to both products.

We now derive a relation between dual fundamental unitaries which leads to a concrete formula relating the dual trace class products. Let $m_{\star}:\TCQ\pten\TCQ\to\TCQ$ denote multiplication and similarly for $m_{\bullet}$. 

\begin{prop}\label{p:products} Let $\G$ be a locally compact quantum group. There exists a unitary $u\in\BLTQ$ for which $\wh{V}=\sigma WV(1\ten u)$. Consequently, 
\begin{equation}\label{e:products}m_{\bullet}=m_{\star}\circ\Ad(W^*)\circ\Sigma.\end{equation}
When $\G$ is a unimodular Kac algebra for which $\vphi$ is tracial, we have $u=U=\wh{J}J$. 
\end{prop}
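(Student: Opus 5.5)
The plan is to identify $\wh{V}$ by comparing how two unitaries act on the generators of $\BLTQ\ten 1$. We show that $X:=(\sigma WV)^*\wh{V}=V^*W^*\sigma\wh{V}$ commutes with $T\ten 1$ for every $T\in\BLTQ$. This forces $X\in 1\ten\BLTQ$, and hence $X=1\ten u$ for a unitary $u$. Since $\LIQ\vee\LIQH=\BLTQ$, it is enough to check the commutation for $x\ten 1$ with $x\in\LIQ$ and for $\hat{x}\ten 1$ with $\hat{x}\in\LIQH$.

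First we need the leg positions. The right fundamental unitary of $\G$ satisfies $V\in\LIQHP\oten\LIQ=\LIQH'\oten\LIQ$. Applying this to $\wh{\G}$, whose dual is $\G$, gives $\wh{V}\in\LIQ'\oten\LIQH$. Also $W\in\LIQ\oten\LIQH$, and $\wh{W}=\sigma W^*\sigma$.

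\emph{Case $\hat{x}\in\LIQH$.} On one side, $\wh{V}(\hat{x}\ten 1)\wh{V}^*=\wh{\Gam}(\hat{x})$. On the other side, $V$ commutes with $\hat{x}\ten 1$ because its first leg lies in $\LIQH'$. Hence
$$\sigma WV(\hat{x}\ten 1)V^*W^*\sigma=\sigma W(\hat{x}\ten 1)W^*\sigma=\wh{W}^*(1\ten\hat{x})\wh{W}=\wh{\Gam}(\hat{x}).$$
\emph{Case $x\in\LIQ$.} Now $\wh{V}$ commutes with $x\ten 1$, since its first leg lies in $\LIQ'$, so $\wh{V}(x\ten 1)\wh{V}^*=x\ten 1$. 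On the other side,
$$\sigma WV(x\ten 1)V^*W^*\sigma=\sigma W\Gam(x)W^*\sigma=\sigma(1\ten x)\sigma=x\ten 1.$$
In both cases $\sigma WV$ and $\wh{V}$ implement the same map, so $X$ commutes with both generating families. This gives $\wh{V}=\sigma WV(1\ten u)$.

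\emph{Formula (\ref{e:products}).} For $\om,\tau\in\TCQ$ and $T\in\BLTQ$, the factor $1\ten u$ commutes with $T\ten 1$ and drops out:
$$\la\om\bullet\tau,T\ra=\la\om\ten\tau,\sigma WV(T\ten1)V^*W^*\sigma\ra=\la W^*\sigma(\om\ten\tau)\sigma W,\Gam^r(T)\ra.$$
The right-hand side is $m_\star\circ\Ad(W^*)\circ\Sigma$ applied to $\om\ten\tau$, with the adjoint action read at the predual level. One then extends from elementary tensors to $\TCQ\pten\TCQ$ by linearity and continuity.

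\emph{Kac case.} For a unimodular Kac algebra with $\vphi$ tracial, we substitute $V=\sigma(1\ten U)W(1\ten U^*)\sigma$ from (\ref{e:WV}) and $\wh{V}=(J\ten J)W(J\ten J)$ from (\ref{e:hat}) into $u\ten$-relation $1\ten u=V^*W^*\sigma\wh{V}$ and compute. The simplifications should come from the Kac relations: $R(x)=Jx^*J$, $\wh{J}$ commuting with $J$ up to a scalar (here trivial), and $(J\ten\wh{J})W(J\ten\wh{J})=W^*$. The goal is to show that this expression reduces to $1\ten\wh{J}J$. I expect this bookkeeping with the modular conjugations to be the main obstacle. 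My first attempt would be to test it against the commutative and co-commutative examples via (\ref{e:W_a}) to fix the conventions before doing the general computation.
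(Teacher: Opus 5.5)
Your proof of the existence of $u$ and of (\ref{e:products}) is correct and is essentially the paper's. The paper compares $\Ad(\wh{V})$ and $\Ad(\sigma WV)$ on products $x\hat{x}\ten 1$ and uses weak* density of $\la\LIQ\LIQH\ra$ in $\BLTQ$. You compare them on the two generating families separately and use $\LIQ\vee\LIQH=\BLTQ$; the leg-position facts are the same.

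The gap is the Kac case $u=U$. You only outline it, and the outlined route cannot close.

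First, two of the relations you plan to use are misstated. The correct ones are $R(x)=\wh{J}x^*\wh{J}$ (note that $Jx^*J\in\LIQ'$) and $(\wh{J}\ten J)W(\wh{J}\ten J)=W^*$. With $J\ten\wh{J}$ instead, already for $\G_a$ with $G$ unimodular and nonabelian you get $\xi(s,ts)$ rather than $W^*\xi(s,t)=\xi(s,st)$.

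More seriously, carry out your substitution with the correct relations and with $U=U^*$ (valid since $J\wh{J}=\wh{J}J$ here). You get $\wh{V}=(U\ten 1)W^*(U\ten 1)$ and $\sigma WV(1\ten U)=\sigma W\sigma(1\ten U)W(U\ten U)\sigma$. Rearranging shows that $u=U$ is \emph{equivalent} to
\begin{equation*}
\bigl(W\sigma(1\ten U)\bigr)^3=1,
\end{equation*}
which is a Baaj--Skandalis Kac-system relation. For $\G_a$ it says that $(s,t)\mapsto(s^{-1}t,s^{-1})$ has order $3$. This relation is not a consequence of the conjugation identities you list: all of them are satisfied by $W=1$, for which the left side is $(1\ten U)\sigma\neq 1$. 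So no bookkeeping with modular conjugations will finish the argument, and checking the commutative and co-commutative examples only confirms the identity there. An analytic input of antipode type is required.

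The paper supplies it by comparing matrix coefficients on vectors $\Lphi(a)\ten\Lphi(b)$ and $\Lphi(c)\ten\Lphi(d)$ with $a,b,c,d\in\Nphi$. Traciality and unimodularity give explicit formulas for $J$, $\wh{J}$, $W^*$ and $V$ on such vectors, and they permit cyclic rearrangements under $\vphi\ten\vphi$. The decisive step is the strong invariance formula \cite[Proposition 5.24]{KV1} (with $S=R$), used in the form $(\vphi\ten\id)(\Gam(a)(c^*b\ten 1))=R\bigl((\vphi\ten\id)((a\ten 1)\Gam(c^*b))\bigr)$. This is what replaces the missing three-fold relation, and it yields $W^*=V(1\ten U)\wh{V}^*\sigma$ directly.
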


\begin{proof} Fix $x\in\LIQ$ and $\h{x}\in\LIQH$. Then
\begin{align*}\wh{V}(x\h{x}\ten 1)\wh{V}^*&=(x\ten 1)\wh{\Gamma}(\hat{x})\\
&=\sigma W\Gamma(x)W^*\sigma\wh{\Gamma}(\hat{x})\\
&=\sigma W\Gamma(x)W^*\sigma(\wh{W}^*(1\ten \h{x})\wh{W})\\
&=\sigma W\Gamma(x)(\h{x}\ten 1)\sigma\wh{W}\\
&=\sigma WV(x\h{x}\ten 1)V^*W^*\sigma.
\end{align*}
By weak* density of $\la\LIQ\LIQH\ra$ in $\BLTQ$ \cite[Proposition 2.5]{VV}, it follows that 
$$\wh{V}(T\ten 1)\wh{V}^*=\sigma WV(T\ten 1)V^*W^*\sigma, \ \ \ T\in\BLTQ,$$
which implies the existence of a (necessarily unique unitary) element $u\in\BLTQ$ such that $\wh{V}=\sigma WV(1\ten u)$. At the level of co-multiplications, this implies
$$\wh{\Gamma}^r(T)=\sigma WV(1\ten u)(T\ten 1)(1\ten u)^*V^*W^*\sigma=\sigma W\Gamma^r(T)W^*\sigma, \ \ \ T\in\BLTQ,$$
that is, $\wh{\Gamma}^r=\Sigma\circ\Ad(W)\circ\Gamma^r$. Taking pre-adjoints, it follows that $m_{\bullet}=m_{\star}\circ\Ad(W^*)\circ\Sigma$. 

Finally, suppose that $\G$ is a unimodular Kac algebra for which $\vphi$ is tracial. We will show that $u$ is necessarily $U=\wh{J}J$ by showing directly that $\wh{V}=\sigma WV(1\ten U)$.

Let $a,b,c,d\in\Nphi$. Using the relation $\wh{V}=(J\ten J)W(J\ten J)$, we have
\begin{align*}
&\la V(1\ten U)\wh{V}^*\sigma\Lphi(a)\ten \Lphi(b),V\Lphi(c)\ten \Lphi(d)\ra\\
&= \la (1\ten U)\wh{V}^*\sigma\Lphi(a)\ten \Lphi(b),\Lphi(c)\ten \Lphi(d)\ra\\
&=\la (J\ten \wh{J})W^*(\Lphi(b^*)\ten\Lphi(a^*)),\Lphi(c)\ten\Lphi(d)\ra\\
&=\la \Lphi(c^*)\ten\Lphi(R(d^*)),W^*(\Lphi(b^*)\ten\Lphi(a^*))\ra\\
&=\la \Lphi(c^*)\ten\Lphi(R(d^*)),\Lphit(\Gamma(a^*)(b^*\ten 1))\ra\\
&=(\vphi\ten\vphi)((b\ten 1)\Gamma(a)(c^*\ten R(d^*)))\\
&=R(d^*)\cdot\vphi((\vphi\ten\id)(\Gamma(a)(c^*b\ten 1)))\\
&=R(d^*)\cdot\vphi(R((\vphi\ten\id)((a\ten 1)\Gamma(c^*b)))) \ \ \ \textnormal{(\cite[Proposition 5.24]{KV1})}\\
&=(\vphi\ten\vphi)((a\ten d^*)\Gamma(c^*b))\\
&=(\vphi\ten\vphi)((1\ten d^*)\Gamma(c^*b)(a\ten 1))\\
&=\la\Lphit(\Gamma(b)(a\ten 1)),\Lphit(\Gamma(c)(1\ten d))\ra\\
&=\la W^*\Lphi(a)\ten\Lphi(b),V\Lphi(c)\ten\Lphi(d)\ra.
\end{align*}
It follows that $W^*=V(1\ten U)\wh{V}^*\sigma$, equivalently, $\wh{V}=\sigma WV(1\ten U)$. 

\end{proof}

\begin{remark} The identity $\wh{V}=\sigma WV(1\ten U)$ could potentially hold for more general quantum groups $\G$.
\end{remark}

\section{Lie/Jordan Products Encoding Quantum Group Duality}

Throughout this section, unless otherwise stated, $\G$ denotes a locally compact quantum group. We now define Joran/Lie-type products on $\TCQ$ using both $\star$ and its dual $\bullet$. In what follows, we let $\TCQ_0:=\mathrm{Ker}(\tr)$. 

\begin{thm}\label{t:Jordan} The maps $\ostar,\ostar^+:\TCQ\times\TCQ\to\TCQ$ given by
\begin{align}\label{e:product}\rho\ostar\tau&:=\frac{1}{2}(\rho\star\tau-\tau\bullet\rho), \ \ \ \rho,\tau\in\TCQ,\\
\label{e:jordan}\rho\ostar^+\tau&:=\frac{1}{2}(\rho\star\tau+\tau\bullet\rho), \ \ \ \rho,\tau\in\TCQ,\end{align}
are bilinear, and define associative completely contractive Banach algebra products on $\TCQ_0$.
\end{thm}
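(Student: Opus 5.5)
The plan is a direct computation: expand both sides of the associativity identity and reduce them with the relations (\ref{e:dr}) and (\ref{e:cr}) of \cite{KN}. The trace-zero hypothesis is used only to kill the cross terms that (\ref{e:dr}) produces.

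\textbf{Preliminaries.} Bilinearity is immediate, since $\star$ and $\bullet$ are bilinear. Next I will show that $\TCQ_0$ is closed under both products. It is a norm closed subspace, being the kernel of the bounded functional $\tr$, and hence a Banach space. Since $\tr$ is multiplicative for both $\star$ and $\bullet$, we get $\tr(\rho\star\tau)=\tr(\rho)\tr(\tau)=0$, and likewise for $\bullet$, whenever $\rho,\tau\in\TCQ_0$. So $\ostar$ and $\ostar^+$ map $\TCQ_0\times\TCQ_0$ into $\TCQ_0$. In fact this holds as soon as one of the two factors has trace zero.

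\textbf{Complete contractivity.} The maps $m_\star$ and $m_\bullet\circ\Sigma$ are complete contractions on $\TCQ\pten\TCQ$, because the flip $\Sigma$ is a complete isometry of the operator space projective tensor product. Hence $\frac12(m_\star\mp m_\bullet\circ\Sigma)$ has cb-norm at most $\frac12(1+1)=1$. Restricting to $\TCQ_0\pten\TCQ_0$, which maps completely contractively into $\TCQ\pten\TCQ$, gives the complete contractivity of both products.

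\textbf{Associativity.} This is the main point. Fix $\rho,\tau,\om\in\TCQ_0$ and write $\epsilon=\mp1$, so that $\rho\ostar\tau=\frac12(\rho\star\tau+\epsilon\,\tau\bullet\rho)$ covers both cases. Expanding the left-hand side gives
\begin{align*}
4(\rho\ostar\tau)\ostar\om
&=(\rho\star\tau)\star\om+\epsilon(\tau\bullet\rho)\star\om+\epsilon\,\om\bullet(\rho\star\tau)+\om\bullet(\tau\bullet\rho).
\end{align*}
I will reduce the terms as follows.
\begin{itemize}
\item By (\ref{e:cr}), $(\tau\bullet\rho)\star\om=(\tau\star\om)\bullet\rho$.
\item By (\ref{e:dr}), $\om\bullet(\rho\star\tau)=\tau(1)\,\om\bullet\rho=0$, since $\tau(1)=\tr(\tau)=0$.
\end{itemize}
Expanding the right-hand side gives
\begin{align*}
4\rho\ostar(\tau\ostar\om)
&=\rho\star(\tau\star\om)+\epsilon\,\rho\star(\om\bullet\tau)+\epsilon(\tau\star\om)\bullet\rho+(\om\bullet\tau)\bullet\rho.
\end{align*}
Here (\ref{e:dr}) gives $\rho\star(\om\bullet\tau)=\tau(1)\,\rho\star\om=0$.

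After these reductions, both sides equal
$$\rho\star\tau\star\om+\epsilon(\tau\star\om)\bullet\rho+\om\bullet\tau\bullet\rho,$$
using the associativity of $\star$ and of $\bullet$. This proves associativity of both $\ostar$ and $\ostar^+$ on $\TCQ_0$.

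The only delicate point is bookkeeping: one must check that each application of (\ref{e:dr}) produces the scalar $\tr$ of an element of $\TCQ_0$. This is exactly why the products are associative only on the trace-zero subspace and not on all of $\TCQ$.
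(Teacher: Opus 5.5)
Your proposal is correct and is essentially the paper's proof. You expand the triple product, use (\ref{e:cr}) to rewrite the mixed term, use (\ref{e:dr}) together with the trace-zero middle factor to kill the cross terms, and finish with associativity of $\star$ and $\bullet$; the only differences are that the paper turns the left side into the right by inserting a vanishing cross term rather than expanding both sides, that you treat $\ostar$ and $\ostar^+$ at once via $\epsilon$, and that your checks that $\TCQ_0$ is closed under both products and that $\frac{1}{2}(m_\star\mp m_\bullet\circ\Sigma)$ is a complete contraction fill in details the paper leaves implicit.
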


\begin{proof} We first verify bilinearity of $\ostar$: given $\rho,\om,\tau\in\TCQ$ and $a,b\in\C$, we have
\begin{align*}\rho\ostar(a\om+b\tau)&=\frac{1}{2}(\rho\star(a\om+b\tau)-(a\om+b\tau)\bullet\rho)\\
&=\frac{1}{2}(a\rho\star\om+b\rho\star\tau-a\om\bullet\rho-b\tau\bullet\rho)\\
&=a\rho\ostar\om+b\rho\ostar\tau.
\end{align*}
Linearity in the first argument is identical.

Now, let $\rho,\om,\tau\in\TCQ_0$. We calculate:
\begin{align*}
(\rho\ostar\om)\ostar\tau&=\frac{1}{2}((\rho\ostar\om)\star\tau-\tau\bullet(\rho\ostar\om))\\
&=\frac{1}{4}((\rho\star\om-\om\bullet\rho)\star\tau-\tau\bullet(\rho\star\om-\om\bullet\rho)).
\end{align*}
By (\ref{e:dr}), $\tau\bullet(\rho\star\om)=\om(1)\tau\bullet\rho=0$, and by (\ref{e:cr}) $(\om\bullet\rho)\star\tau=(\om\star\tau)\bullet\rho$. Together with associativity of the individual products, we see that
$$(\rho\ostar\om)\ostar\tau=\frac{1}{4}(\rho\star(\om\star\tau)-(\om\star\tau)\bullet\rho+(\tau\bullet\om)\bullet\rho).$$
By adding $0=-\om(1)\rho\star\tau=-\rho\star(\tau\bullet\om)$ (using (\ref{e:dr})), we get
\begin{align*}
(\rho\ostar\om)\ostar\tau&=\frac{1}{4}(\rho\star(\om\star\tau)-(\om\star\tau)\bullet\rho-\rho\star(\tau\bullet\om)+(\tau\bullet\om)\bullet\rho)\\
&=\frac{1}{2}(\rho\ostar(\om\star\tau)-\frac{1}{2}(\rho\star(\tau\bullet\om)-(\tau\bullet\om)\bullet\rho))\\
&=\frac{1}{2}(\rho\ostar(\om\star\tau)-\rho\ostar(\tau\bullet\om))\\
&=\rho\ostar(\frac{1}{2}(\om\star\tau-\tau\bullet\om))\\
&=\rho\ostar(\om\ostar\tau).
\end{align*}
Complete contractivity of the product $\ostar$ follows from that of the individual products. 

The arguments for $\ostar^+$ are identical.
\end{proof}

Inspection of the necessary relations between the dual products used in the above proof leads to the following general result for commuting Banach algebra products.

\begin{prop}\label{p:general} Let $A$ be a Banach space equipped with associative Banach algebra products $\star$ and $\bullet$ satisfying
\begin{enumerate}
\item $a \star (b\bullet c)=b\bullet(a\star c)$,
\item $(a \star b)\bullet c=(a \bullet c) \star b$,
\end{enumerate}
for all $a,b,c\in A$. Then
\begin{align*}a\ostar b&:=\frac{1}{2}(a \star b-b\bullet a), \ \ \ a,b\in A,\\
a\ostar^+b&:=\frac{1}{2}(a\star b+b\bullet a), \ \ \ a,b\in A,
\end{align*}
define associate Banach algebra products on $A$.
\end{prop}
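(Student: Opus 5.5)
Expand both triple products and compare them term by term, using only associativity of the two products together with (1) and (2). Bilinearity is immediate, as in Theorem \ref{t:Jordan}. It also suffices to treat $\ostar$: for $\ostar^+$ the same computation goes through with every minus sign on a $\bullet$-term replaced by a plus, and the pairing of terms below is unaffected by those signs. If "Banach algebra" is meant to include $\norm{a\ostar b}\le\norm{a}\norm{b}$, contractivity follows from that of $\star$ and $\bullet$ and the factor $\frac12$.

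For $a,b,c\in A$, expand
\begin{align*}
4(a\ostar b)\ostar c &= (a\star b)\star c-(b\bullet a)\star c-c\bullet(a\star b)+c\bullet(b\bullet a),\\
4\,a\ostar(b\ostar c) &= a\star(b\star c)-a\star(c\bullet b)-(b\star c)\bullet a+(c\bullet b)\bullet a.
\end{align*}
The first and last terms match by associativity of $\star$ and of $\bullet$. It remains to match the two middle pairs.

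\begin{itemize}
\item By (2) with the substitution $(a,b,c)\mapsto(b,c,a)$, we have $(b\star c)\bullet a=(b\bullet a)\star c$. So the third term on the right equals the second term on the left.
\item By (1) with the substitution $(a,b,c)\mapsto(c,a,b)$, we have $c\star(a\bullet b)=a\bullet(c\star b)$. This is not yet the identity we need, which is $c\bullet(a\star b)=a\star(c\bullet b)$.
\end{itemize}

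So the main obstacle is identifying the correct instance of hypothesis (1). Hypothesis (1) as literally written, $a\star(b\bullet c)=b\bullet(a\star c)$, gives under the substitution $(a,b,c)\mapsto(a,c,b)$ the identity $a\star(c\bullet b)=c\bullet(a\star b)$. This is exactly the needed match between $-c\bullet(a\star b)$ on the left and $-a\star(c\bullet b)$ on the right.

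I would therefore state this substitution explicitly and check the variable bookkeeping carefully, since it is easy to permute the letters wrongly. With both middle pairs matched, all four terms agree and associativity of $\ostar$ follows.

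As a sanity check, in the trace-class setting (1) is implied by (\ref{e:dr}) on $\TCQ_0$, since both sides vanish there. Likewise (2) is (\ref{e:cr}). This is consistent with the proof of Theorem \ref{t:Jordan}.
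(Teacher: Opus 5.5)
Your proof is correct and follows essentially the paper's route. The paper gives no separate proof and defers to the computation in Theorem \ref{t:Jordan}, which expands both triple products, matches the outer terms by associativity, and matches one cross pair via (\ref{e:cr}) (your use of (2)); the other cross pair is handled via (\ref{e:dr}), where on $\TCQ_0$ both $c\bullet(a\star b)$ and $a\star(c\bullet b)$ vanish, which is exactly the instance $a\star(c\bullet b)=c\bullet(a\star b)$ of (1) that you isolate, so the abortive substitution $(a,b,c)\mapsto(c,a,b)$ can simply be deleted from your write-up.
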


In section \ref{s:last} we study a class of Banach algebras satisfying the conditions of Proposition \ref{p:general}.

We now investigate the Banach algebra structure of $(\TCQ_0,\ostar)$, beginning with some basic observations.

\begin{remark}\label{r:TCQ_0}
\begin{enumerate}
\item More general associativity conditions are valid:
$$(\rho\ostar\om_0)\ostar\tau=\rho\ostar(\om_0\ostar\tau), \ \ \textnormal{and} \ \ (\rho\ostar^+\om_0)\ostar^+\tau=\rho\ostar^+(\om_0\ostar^+\tau),$$
for all $\rho,\tau\in\TCQ$ and $\om_0\in\TCQ_0$. That is, only the middle element needs to lie in the ideal $\TCQ_0$ to guarantee associativity. It follows that $\TCQ$ becomes an operator bimodule over $(\TCQ_0,\ostar)$ and $(\TCQ_0,\ostar^+)$. 

\item If one interchanges the roles of $\G$ and $\wh{\G}$ in the definitions of (\ref{e:jordan}) (respectively, (\ref{e:product})), one obtains the opposite product (respectively, the negative of the opposite product). That is,
$$\rho\:\wh{\ostar^+}\:\tau=\frac{1}{2}(\rho\bullet\tau+\tau\star\rho)=\tau\ostar^+\rho, \ \ \ \rho,\tau\in\TCQ_0.$$
$$\rho\:\wh{\ostar}\:\tau=\frac{1}{2}(\rho\bullet\tau-\tau\star\rho)=-\tau\ostar\rho, \ \ \ \rho,\tau\in\TCQ_0.$$
\item The restriction $\pi_0$ of the canonical quotient map $\pi:\TCQ\to\LOQ$ to $\TCQ_0$ clearly maps into $\LOQ_0=\mathrm{Ker}(1)$, the augmentation ideal of $\LOQ$. The adjoint $\pi_0^*:\LIQ/\C1\to\BLTQ/\C1$ of $\pi_0:\TCQ_0\to\LOQ_0$ is an isometry as 
$$\inf\{\norm{x+\lm1}_{L^\infty(\G)}\mid\lm\in\C\}=\inf\{\norm{x+\lm1}_{\BLTQ}\mid\lm\in\C\}, \ \ \ x\in\LIQ.$$
Thus, $\pi_0$ is a quotient map. Moreover, $\frac{1}{2}\pi_0$ is multiplicative: first, $\pi_0(\tau\bullet\rho)=0$ for all $\tau,\rho\in\TCQ_0$:
$$\la\pi_0(\tau\bullet\rho),x\ra=\la\tau\ten\rho,\wh{\Gamma}(x)\ra=\la\tau\ten\rho,x\ten 1\ra=\tau(x)\rho(1)=0, \ \ \ x\in\LIQ.$$
Therefore,
$$\frac{1}{2}\pi_0(\rho\ostar\tau)=\frac{1}{4}(\pi_0(\rho)\star\pi_0(\tau)))=\frac{1}{2}\pi_0(\rho)\star\frac{1}{2}\pi_0(\tau).$$
Similarly, The restriction $\hat{\pi}_0$ of the canonical quotient map $\hat{\pi}:\TCQ\to\LOQH$ to $\TCQ_0$ is a quotient map onto $\LOQH_0:=\mathrm{Ker}(1)$, and $\frac{1}{2}\hat{\pi}_0$ is multiplicative (up to $-1$) with respect to $\ostar^+$ ($\ostar$).
\item The identity $(\rho\star\tau)\star\om=\rho(\tau\bullet\om+\tau\star\om)$, valid for all $\rho,\tau,\om\in\TCQ_0$, coincides with the \textit{shuffle identity} (16) from \cite{EK}. We thank Roland Speicher for bringing \cite{EK} to our attention in a discussion with the second author on parts of the
present work.
\end{enumerate}
\end{remark}

When $\G$ is finite, $\BLTQ=M_d(\C)$, where $d=\dim\LIQ$. In this case, $\TCQ_0$ is the space of $d\times d$ trace zero matrices, i.e., $\mathfrak{sl}(d)$, the Lie algebra of $SL(d,\C)$. Theorem \ref{t:Jordan} therefore yields new associative products on $\mathfrak{sl}(d)$ for any quantum group of dimension $d$.

\begin{example} Let us explicitly calculate the $\ostar$-product on $\mc{T}(\ell^2(G))_0$ for a finite group $G$. By Examples \ref{ex:1} and \ref{ex:2}, for any $\om=[\om_{s,t}],\tau=[\tau_{s,t}]\in\mc{T}(\ell^2(G))_0$,
$$(\om\star\tau)_{s,t}=\sum_{r\in G}\la\rho(r)^*\om\rho(r)\delta_t,\delta_s\ra\tau_{r,r}=\sum_{r\in G}\tau_{r,r}\om_{sr^{-1},tr^{-1}},$$
$$(\tau\bullet\om)_{s,t}=\tr(\om\lm(ts^{-1}))\tau_{s,t}=\tau_{s,t}\sum_{r\in G}\om_{rs,rt}.$$
Hence, the $(s,t)$-entry of the product is 
$$(\om\ostar\tau)_{s,t}=\frac{1}{2}\sum_{r\in G}(\tau_{r,r}\om_{sr^{-1},tr^{-1}}-\tau_{s,t}\om_{rs,rt}).$$
In words, it is the $\pi(\tau)$-average of the ``right $(s,t)$-diagonal'' of $\omega$ minus the $(s,t)$ entry of $\tau$ times the uniform average of the ``left $(s,t)$-diagonal'' of $\om$. 
It follows from the proof of Theorem \ref{t:bai} below that $E=2(\om_{\delta_e}-\om_{\chi})$ is the identity for the $\ostar$-product, where $\chi(s)=|G|^{-1/2}$, $s\in G$, is the normalized constant function. As a matrix,
$$E=2(\delta_e\delta_e^*-\chi\chi^*)=\frac{-2}{|G|}\begin{bmatrix}1-|G| & 1 & \cdots & 1\\
1 & 1 & \cdots & 1\\
\vdots & \vdots & \cdots & \vdots\\
1 & 1 & \cdots & 1\end{bmatrix}.$$
\end{example}

We now show that under mild conditions on a quantum group $\G$, the algebra $(\TCQ_0,\ostar)$ is abelian if and only if $\mathrm{dim}(\G)=1$ (in which case $\TCQ_0=\{0\}$). In preparation, we require:

\begin{lem}\label{l:abelian} Let $\G$ be a locally compact quantum group such that (at least) one of the following conditions hold: 
\begin{enumerate}
\item $L^1(\G)_0=\la L^1(\G)_0\star L^1(\G)_0\ra$;
\item $\G$ is co-amenable.
\end{enumerate}
Then $L^1(\G)$ is abelian if and only if $L^1(\G)_0$ abelian.
\end{lem}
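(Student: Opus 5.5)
The forward implication is immediate, since $L^1(\G)_0$ is a subalgebra of $L^1(\G)$. For the converse we assume $L^1(\G)_0$ is abelian and show $f\star g=g\star f$ for all $f,g\in\LOQ$. The basic tool is the decomposition $f=f(1)f_1+(f-f(1)f_1)$, where $f_1$ is any fixed element with $f_1(1)=1$. This gives $\LOQ=\C f_1\oplus L^1(\G)_0$, so it suffices to control products involving $f_1$.

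Under condition (1), we first show that elements of $L^1(\G)_0$ are central in $\LOQ$. Take $a=b\star c$ with $b,c\in L^1(\G)_0$, and let $f\in\LOQ$ be arbitrary. Since $L^1(\G)_0$ is a two-sided ideal (the counit $1$ is a character), both $f\star b$ and $c\star f$ lie in $L^1(\G)_0$. Using commutativity inside $L^1(\G)_0$ we compute
\[
f\star(b\star c)=(f\star b)\star c=c\star(f\star b)=(c\star f)\star b=b\star(c\star f)=(b\star c)\star f .
\]
Hence every such product commutes with $f$. By linearity and continuity, and using (1), every element of $L^1(\G)_0$ commutes with every $f\in\LOQ$. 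Then for $f,g\in\LOQ$, write $f=f(1)f_1+f_0$ and $g=g(1)f_1+g_0$. The term $f(1)g(1)f_1\star f_1$ is symmetric in $f$ and $g$, and every cross term involves an element of $L^1(\G)_0$, which is central. Therefore $f\star g=g\star f$.

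Under condition (2), take a bounded approximate identity $(e_\alpha)$ of $\LOQ$. We may assume the $e_\alpha$ are states; failing that, we note $e_\alpha(1)\to1$, since $1$ is a character and so $e_\alpha(1)h(1)\to h(1)$ for some $h$ with $h(1)\neq0$. For $f_0\in L^1(\G)_0$ and $f\in\LOQ$, set $a_\alpha=e_\alpha-e_\alpha(1)f_1$. Then
\[
f_0\star f=\lim_\alpha f_0\star f\star e_\alpha ,
\]
and we aim to show that $f_0\star f$ commutes with elements of $L^1(\G)_0$. The cleaner route is a direct argument. For $f,g\in\LOQ$ and $h_0,k_0\in L^1(\G)_0$, the products $f\star h_0$ and $k_0\star g$ lie in $L^1(\G)_0$, so commutativity there gives
\[
(f\star h_0)\star(k_0\star g)=(k_0\star g)\star(f\star h_0).
\]
The elements $h_0=e_\alpha-\omega$ lie in $L^1(\G)_0$ for a fixed state $\omega$, but they do not obviously form an approximate identity. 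Instead, we prove that $L^1(\G)_0$ has a bounded approximate identity, or at least that $\la L^1(\G)_0\star L^1(\G)_0\ra=L^1(\G)_0$, thereby reducing (2) to (1).

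For this reduction, note that $L^1(\G)_0$ is a closed ideal of codimension one. For $f_0\in L^1(\G)_0$ we have $f_0=\lim_\alpha f_0\star e_\alpha$, and $f_0\star e_\alpha=f_0\star(e_\alpha-e_\alpha(1)\omega)+e_\alpha(1)f_0\star\omega$. The first term lies in $L^1(\G)_0\star L^1(\G)_0$. The remaining term $f_0\star\omega$ must also be shown to lie in the closed span $\la L^1(\G)_0\star L^1(\G)_0\ra$. Since $\omega$ is arbitrary, replace it by $e_\beta$ and take an iterated limit:
\[
f_0=\lim_\beta f_0\star e_\beta=\lim_\beta\lim_\alpha f_0\star e_\beta\star e_\alpha .
\]
Rather than pursuing this further, it is simpler to observe that $f_0\star\omega=\lim_\alpha(f_0\star e_\alpha)\star\omega$ and to exploit the fact that the closed span $I:=\la L^1(\G)_0\star L^1(\G)_0\ra$ is an ideal. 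The quotient $L^1(\G)_0/I$ is then a Banach algebra with zero product. Because the approximate identity acts on it, $L^1(\G)_0/I$ is a module over $\LOQ$ with $f_0 = \lim f_0\star e_\alpha$ acting through the character $1$. So $\LOQ$ acts on $L^1(\G)_0/I$ through the scalar $e_\alpha(1)$, and $L^1(\G)_0$ acts by zero. This is consistent, and forces no contradiction by itself. Showing that $I=L^1(\G)_0$, or finding a direct commutation argument that avoids it, is the main obstacle in case (2), and it is where I expect the real work to lie.
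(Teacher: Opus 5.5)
Your forward direction and your treatment of case (1) are correct and coincide with the paper's argument: the same chain of associativity and commutativity steps, followed by splitting off a normalized element. The problem is case (2). The proposal never proves it, and it ends by stating that showing $I=L^1(\G)_0$ ``is the main obstacle'', so as written there is a genuine gap.

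The route you chose, reducing (2) to (1), is also a detour. It amounts to showing that any $x\in\LIQ$ with $x\star f_0\in\C1$ for all $f_0\in L^1(\G)_0$ is a scalar. Under co-amenability this can be done, but it needs extra input that does not appear in your proposal. For instance, one needs that a bounded $z$ with $\Gam(z)=z\ten 1+1\ten z$ must vanish, and that $\Gam(y)=1\ten y$ forces $y\in\C1$.

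The missing idea is already latent in your own decomposition. For \emph{any} $f_1\in\LOQ$ with $f_1(1)=1$, the elements $f-f(1)f_1$ and $g-g(1)f_1$ lie in $L^1(\G)_0$ and therefore commute. Expanding their commutator gives the exact identity
\[
[f,g]=g(1)[f,f_1]+f(1)[f_1,g], \qquad f,g\in\LOQ,
\]
which needs no centrality at all.

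In case (1) you controlled the right-hand side through centrality. In case (2), take instead $f_1=h_i$, where $(h_i)$ is a two-sided bounded approximate identity with $h_i(1)=1$. Co-amenability provides one consisting of states; alternatively, normalize $e_\alpha/e_\alpha(1)$ as you suggested. Then $[f,h_i]=f\star h_i-h_i\star f\to f-f=0$, and likewise $[h_i,g]\to 0$, so $[f,g]=0$. This is exactly the paper's proof of (2), and no factorization of $L^1(\G)_0$ is needed.
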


\begin{proof} One direction trivially holds for any $\G$, so suppose that $L^1(\G)_0$ abelian. 

Assume condition (1) holds, and fix a state $h\in\LOQ$. If $f_0=\sum_{i=1}^n f_{1,i}\star f_{2,i}$ with each pair $f_{1,i},f_{2,i}\in L^1(\G)_0$ then 
$$f_0\star h=\sum_{i=1}^n f_{1,i}\star \underbrace{(f_{2,i}\star h)}_{\in L^1(\G)_0}=\sum_{i=1}^n (f_{2,i}\star h)\star f_{1,i}=\sum_{i=1}^n f_{2,i}\star \underbrace{(h\star f_{1,i})}_{\in\LOQ_0}=\sum_{i=1}^n h\star f_{1,i}\star f_{2,i}=h\star f_0.$$
Since $L^1(\G)_0=\la L^1(\G)_0\star L^1(\G)_0\ra$, the same conclusion is valid for arbitrary $f_0\in\LOQ_0$. 

Now, for any $f,g\in\LOQ$, there exist $f_0,g_0\in L^1(\G)_0$ such that
$$f=f_0+f(1)h \ \ \textnormal{and} \ \ g=g_0+g(1)h.$$
Denoting the commutator of elements in $\LOQ$ by $[\cdot,\cdot]$, one sees that
$$[f,g]=g(1)[f_0,h]+f(1)[h,g_0].$$
By the above reasoning, $[f_0,h]=[h,g_0]=0$, so that $[f,g]=0$ and the claim follows.

Assume condition (2) holds, and let $(h_i)$ be a bounded approximate identity of $\LOQ$ consisting of states \cite[Theorem 2]{HNR}. Then given any $f,g\in\LOQ$, there exist $f_i,g_i\in L^1(\G)_0$ such that
$$f=f_i+f(1)h_i \ \ \textnormal{and} \ \ g=g_i+g(1)h_i.$$
As above, we have
$$[f,g]=g(1)[f_i,h_i]+f(1)[h_i,g_i]=g(1)f(1)([f,h_i]+[h_i,g]).$$
But $[f,h_i],[h_i,g]\to 0$, so we must have $[f,g]=0$, and the proof is complete.
\end{proof}

\begin{remark} It is known that $\LOQ_0$ has a bounded approximate identity if and only if $\G$ is both amenable and co-amenable \cite[Proposition 16]{HNR0}. The existence of a bounded approximate identity for $\LOQ_0$ implies the factorization $\LOQ_0=\LOQ_0\star\LOQ_0$ which in turn implies $\LOQ_0=\la\LOQ_0\star\LOQ_0\ra$.
\end{remark}

\begin{prop}\label{p:abelian} Let $\G$ be a locally compact quantum group such that (at least) one of the following conditions hold: 
\begin{enumerate}
\item $\wh{\G}$ is co-amenable;
\item $\G$ is co-amenable;
\item $L^1(\wh{\G})_0=\la L^1(\wh{\G})_0\star L^1(\wh{\G})_0\ra$;
\item $L^1(\G)_0=\la L^1(\G)_0\star L^1(\G)_0\ra$.
\end{enumerate}
Then if $\mathrm{dim}(\G)\geq 2$, $(\TCQ_0,\ostar)$ is nonabelian.
\end{prop}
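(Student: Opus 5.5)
We argue by contraposition: assume $(\TCQ_0,\ostar)$ is abelian and $\mathrm{dim}(\G)\geq 2$, and derive a contradiction. The tool is Remark \ref{r:TCQ_0}(3). The map $\frac12\pi_0:(\TCQ_0,\ostar)\to(\LOQ_0,\star)$ is a surjective homomorphism. Likewise $\frac12\hat{\pi}_0$ is, up to the sign $-1$, a surjective homomorphism onto $(\LOQH_0,\h{\star})$; here $\h\star$ is the product of $\LOQH$, which we write as $\star$ of $\wh{\G}$ to match condition (3).

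\textbf{Step 1: both augmentation ideals are commutative.} Let $f,g\in\LOQ_0$ and lift them to $\rho,\tau\in\TCQ_0$. Then
$$\tfrac14 f\star g=\tfrac12\pi_0(\rho\ostar\tau)=\tfrac12\pi_0(\tau\ostar\rho)=\tfrac14 g\star f,$$
so $L^1(\G)_0$ is abelian. On the dual side, $\hat\pi_0(\rho\ostar\tau)=-\frac12\hat\pi_0(\tau)\h\star\hat\pi_0(\rho)$; the sign does not affect commutativity. Hence $\LOQH_0$ is abelian as well.

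\textbf{Step 2: both convolution algebras are commutative.} Apply Lemma \ref{l:abelian} to $\G$ under conditions (2) or (4), and to $\wh{\G}$ under conditions (1) or (3). The lemma is stated for an arbitrary locally compact quantum group, so it applies to $\wh{\G}$. This makes one of $\LOQ$ or $\LOQH$ abelian. A single abelian algebra is not enough, however: we need both.

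\textbf{Step 3: removing the one-sided hypothesis.} Suppose we only know that $\LOQ$ is abelian, for instance under (2) or (4). Then $\G$ is co-commutative, so $\G=\G_s$ and $\wh{\G}=\G_a$ for some locally compact group $G$. Now $L^1(\wh{\G})=\LO$, and $\LO$ has a bounded approximate identity, so $\wh{\G}$ is co-amenable. Lemma \ref{l:abelian}(2), applied to $\wh{\G}$, then shows that $\LOQH$ is abelian, i.e. $\LO$ is abelian. Symmetrically, if we start from $\LOQH$ abelian, then $\wh{\G}$ is co-commutative, so $\G=\G_a$ and $L^1(\G)=\LO$ again has a bounded approximate identity. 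Lemma \ref{l:abelian}(2) then makes $\LOQ$ abelian too.

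\textbf{Step 4: conclusion.} Both $\LIQ$ and $\LIQH$ are now commutative, so both are contained in $\LIQ'\cap\LIQH'$. Since $\LIQH'=\LIQHP$, this gives $\LIQ\subseteq\LIQH'$, and hence $\LIQ\subseteq \LIQH'\cap\LIQ'$. We expect this step to be the main obstacle, and the cleanest route is to use groups. Since $\G=\G_a$ with $G$ abelian, we have $\LIQH=VN(G)\cong L^\infty(\wh G)$. Consider a multiplication operator $M_f\in\LI$ that commutes with all translations $\lm(t)$. Such an $f$ must be translation invariant, hence constant. Therefore $\LIQ=\LIQ\cap\LIQH'=\C1$, which contradicts $\mathrm{dim}(\G)\geq2$. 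Equivalently, one can invoke $\LIQ\cap\LIQHP=\C1$ from \cite[Proposition 3.4]{VD}.
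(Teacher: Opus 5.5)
There is a genuine gap, and it sits exactly where you flag the main obstacle, in Step 4. Steps 1--3 are fine. Step 3 even sharpens the paper's ``without loss of generality $\G$ is commutative'' to $\G=\G_a$ with $G$ abelian.

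The problem is the claim that, since $\LIQ$ and $\LIQH$ are both commutative, both lie in $\LIQ'\cap\LIQH'$. Commutativity of each algebra separately gives only $\LIQ\subseteq\LIQ'$ and $\LIQH\subseteq\LIQH'$. It says nothing about $\LIQ$ and $\LIQH$ commuting \emph{with each other}, so $\LIQ\subseteq\LIQH'$ does not follow.

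In fact this inclusion fails for every nontrivial $\G$. Since $\la\LIQ\LIQH\ra$ is weak* dense in $\BLTQ$, two mutually commuting commutative algebras here would make $\BLTQ$ commutative. Concretely, for any nontrivial abelian $G$ (say $G=\mathbb{Z}_2$), $\G_a$ has $L^\infty(G)$ and $VN(G)$ both commutative and $\dim(\G_a)\geq 2$, yet $M_f$ and $\lm(s)$ do not commute. So the conclusion of your Steps 1--3, that $\G$ is both commutative and co-commutative, is not a contradiction at all.

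The missing idea is that the information carried by $\pi_0$ and $\hat{\pi}_0$ is used up at this point. For abelian $G$, both $L^1(G)_0$ and $A(G)_0$ are commutative. Non-commutativity of $\ostar$ must therefore be detected on $\TC_0$ itself, by pairing against operators that mix $\LI$ and $VN(G)$.

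The paper does exactly this. With $\G=\G_a$, it rewrites $\rho\ostar\tau=\tau\ostar\rho$ as $\la\rho\ten\tau,\Gamma^r(T)-\Sigma\wh{\Gamma}^r(T)\ra=\la\rho\ten\tau,\Sigma\Gamma^r(T)-\wh{\Gamma}^r(T)\ra$. It then tests this identity with:
\begin{itemize}
\item $T=M_f\lm(s)$, where $f\in C_0(G)$ and $s\neq e$;
\item $\rho=\om_\xi-\om_\eta$, where the unit vectors $\xi,\eta$ concentrate at points $g,h\in G$.
\end{itemize}
Passing to limits shows that $((\delta_g-\delta_h)\star M_f)\lm(s)$ is annihilated by all of $\TC_0$, hence is a scalar multiple of $1$. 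Then $L^\infty(G)\cap VN(G)=\C1$ and $s\neq e$ force the right translates of $f$ by $g$ and by $h$ to coincide, for all $g,h$ and all $f\in C_0(G)$. This is impossible unless $G$ is trivial.

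To close your proof you need an argument of this kind, working at the trace-class level. An alternative would be an explicit computation from the formulas of Examples \ref{ex:1} and \ref{ex:2} exhibiting $\rho\ostar\tau\neq\tau\ostar\rho$.
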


\begin{proof} Let $\G$ be such that $\mathrm{dim}(\G)\geq 2$. Assume $(\TCQ_0,\ostar)$ is abelian. Since $\frac{1}{2}\hat{\pi}_0:\TCQ_0\to L^1(\wh{\G})_0$ is a surjective map satisfying
$$\frac{1}{2}\hat{\pi}_0(\rho\ostar\tau)=-\frac{1}{2}\hat{\pi}_0(\rho)\bullet\frac{1}{2}\hat{\pi}_0(\tau), \ \ \ \rho,\tau\in\TCQ_0$$
(by Remark \ref{r:TCQ_0}(3)), it follows that $L^1(\wh{\G})_0$ is abelian. Similarly, since $\frac{1}{2}\hat{\pi}_0:\TCQ_0\to L^1(\G)_0$ is a surjective homomorphism (by Remark \ref{r:TCQ_0}(3)), $\LOQ_0$ is abelian. Therefore, under any of the above four conditions, we can conclude that either $L^1(\wh{\G})$ or $\LOQ$ is abelian by Lemma \ref{l:abelian}. Thus, we may assume without loss of generality that $\G$ is a commutative quantum group, hence $\LIQ=L^\infty(G)$ for a locally compact group $G$ \cite[Theorem 2]{T}.

Since $(\TC_0,\ostar)$ is abelian, for every $\rho,\tau\in\TC_0$ and $T\in\BLT$,
$$\la\rho\ten\tau,\Gam^r(T)-\Sigma\wh{\Gam}^r(T)\ra=2\la\rho\ostar\tau,T\ra=2\la\tau\ostar\rho,T\ra=\la\rho\ten\tau,\Sigma\Gam^r(T)-\wh{\Gam}^r(T)\ra.$$
Fix $\tau\in\TC_0$, $f\in C_0(G)$ and $s\in G$, $s\neq e$. Then with $T=M_f\lambda(s)$, we have
$$\la\rho\ten\tau,\Gam(M_f)(\lambda(s)\ten 1)-(\lm(s)\ten M_f\lm(s))\ra=\la\rho\ten\tau,\Sigma\Gam(M_f)(1\ten \lm(s))-(M_f\lm(s)\ten\lm(s))\ra$$
for all $\rho\in\TC_0$.
Let $\xi$ and $\eta$ be unit vectors in $\LT$, and put $\rho=\om_{\xi}-\om_{\eta}\in\TC_0$. One easily checks that $(\lm(s)\cdot\rho)|_{\LI}=\lm(s)\xi\cdot\overline{\xi}-\lm(s)\eta\cdot\overline{\eta}\in\LO$, so that
\begin{align}\label{e:s}&\la \tau,M_f\star(\lm(s)\xi\cdot\overline{\xi}-\lm(s)\eta\cdot\overline{\eta})-(\la\lm(s)\xi,\xi\ra-\la\lm(s)\eta,\eta\ra)M_f\lm(s)\ra\\
&=\la\tau,((\xi\cdot\overline{\xi}-\eta\cdot\overline{\eta})\star M_f-(\la M_f\lm(s)\xi,\xi\ra-\la M_f\lm(s)\eta,\eta\ra)\lm(s)\ra.
\end{align}
Now, let $g,h\in G$, and take nets $(\xi_i)$ and $(\eta_j)$ of unit vectors in $C_c(G)\subseteq\LT$ such that for any neighbourhoods $U$ and $V$ of $g$ and $h$, respectively, there exist $i_U$ and $j_V$ for which $\mathrm{supp}(\xi_i)\subseteq U$ for $i\geq i_U$ and $\mathrm{supp}(\eta_j)\subseteq V$ for $j\geq j_V$. Since $s\neq e$, there are neighbourhoods $U$ and $V$ of $g$ and $h$, respectively, such that $s^{-1}U\cap U=s^{-1}V\cap V=\emptyset$. Hence, $\lm(s)\xi_i\cdot\overline{\xi_i}, \lm(s)\eta_j\cdot\overline{\eta_j}\to 0$ in $(\LO,\norm{\cdot}_1)$. Taking limits in (\ref{e:s}) It follows that
$$\lim_i\lim_j
\la \tau,M_f\star(\lm(s)\xi_i\cdot\overline{\xi_i}-\lm(s)\eta_j\cdot\overline{\eta_j})-(\la\lm(s)\xi_i,\xi_i\ra-\la\lm(s)\eta_j,\eta_j\ra)M_f\lm(s)\ra=0.$$
On the other hand, $\xi_i\cdot\overline{\xi_i}\to\delta_g$ and $\eta_j\cdot\overline{\eta_j}\to\delta_h$ weak* in $M(G)$, so that $(\xi_i\cdot\overline{\xi_i}-\eta_j\cdot\overline{\eta_j})\star M_f\to (\delta_g-\delta_h)\star M_f$ weak* in $\LI$ (as $f\in C_0(G)$). Also, 
$$\la M_f\lm(s)\xi_i,\xi_i\ra-\la M_f\lm(s)\eta_j,\eta_j\ra=\la M_f,\lm(s)\xi_i\cdot\overline{\xi_i}-\lm(s)\eta_j\cdot\overline{\eta_j}\ra\to 0,$$
so that
$$0=\lim_i\lim_j \la\tau,((\xi_i\cdot\overline{\xi_i}-\eta_j\cdot\overline{\eta_j})\star M_f-(\la M_f\lm(s)\xi_i,\xi_i\ra-\la M_f\lm(s)\eta_j,\eta_j\ra)\lm(s)\ra=\la\tau,((\delta_g-\delta_h)\star M_f)\lm(s)\ra.$$
Since $\tau\in\TC_0$ was arbitrary, it follows that 
$$M_{\rho(g)f}\lm(s)=M_{\rho(h)f}\lm(s)+z1,$$
for some $z\in\C$. But then
$$M_{\rho(g)f}-M_{\rho(h)f}=z\lm(s^{-1})\in\LI\cap VN(G)=\C1,$$
which, as $s\neq e$, forces $z=0$, so that $\rho(g)f=\rho(h)f$. Since $G$ has at least 2 elements, then choosing $g=s$ and $h=e\neq g$, we see that $\rho(s)f=f$. Since $f\in C_0(G)$ was arbitrary, we obtain a contradiction. Hence $G=\{e\}$ is trivial.
\end{proof}

\begin{cor} For any locally compact group $G$, with $|G|\geq 2$, $(\TC_0,\ostar)$ is nonabelian.
\end{cor}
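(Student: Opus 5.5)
The plan is to deduce the corollary directly from Proposition \ref{p:abelian}, applied to the commutative quantum group $\G_a=(\LI,\Gam_a,\vphi_a,\psi_a)$. Three things must be checked.

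First, $\mathrm{dim}(\G_a)=\mathrm{dim}\,\LI\geq 2$ as soon as $|G|\geq 2$. Pick $s\neq e$ and disjoint open neighbourhoods of $e$ and $s$. Their indicator functions are linearly independent elements of $\LI$, since nonempty open sets have positive Haar measure.

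Second, $\G_a$ must satisfy one of the four conditions of Proposition \ref{p:abelian}. I would use condition (2): $\G_a$ is always co-amenable. Indeed, $L^1(\G_a)=\LO$ is the group convolution algebra, which has a bounded approximate identity. A standard choice is normalized indicator functions $|U|^{-1}1_U$, as $U$ ranges over a neighbourhood base of $e$.

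Third, the trace class $\mc{T}(L^2(\G_a))$ is precisely $\TC$, with the product $\ostar$ built from $\star_a$ and its dual $\bullet$ (coming from $\G_s=\wh{\G}_a$). So the algebra $(\TC_0,\ostar)$ in the corollary is exactly $(\mc{T}(L^2(\G_a))_0,\ostar)$.

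With these facts, Proposition \ref{p:abelian} gives that $(\TC_0,\ostar)$ is nonabelian. None of the steps is a real obstacle. The only point worth a sentence is the dimension count, which rests on Haar measure charging nonempty open sets. Alternatively, one could invoke condition (1), since $\wh{\G}_a=\G_s$ is co-amenable if and only if $G$ is amenable. That route works only for amenable $G$, so condition (2) is the right choice for arbitrary $G$.
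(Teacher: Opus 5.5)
Your proposal is correct and matches how the paper obtains the corollary: the paper states it without separate proof, as an immediate consequence of Proposition \ref{p:abelian} applied to $\G_a$, with condition (2) supplied by the bounded approximate identity of $\LO$. One side remark: since $\G_a$ is already commutative, the reduction step in that proposition's proof is not needed here, and its commutative-case argument applies directly, so co-amenability is not essentially used.
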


\subsection{Approximate Identities for $\TCQ_0$}


We now study the existence of bounded approximate identities for $(\TCQ_0,\ostar)$, beginning with the following well-known Lemma, whose proof for we include for completeness. 

\begin{lem}\label{l:jhat} Let $\G$ be a locally compact quantum group. If $\G$ is co-amenable, there exists a net $(\eta_i)$ in $\Lphi(\Nphi)_{\norm{\cdot}=1}$, satisfying $J\eta_i=\eta_i$ and 
$$\norm{V(\xi\ten\eta_i)-\xi\ten\eta_i},\norm{W(\eta_i\ten\xi)-\eta_i\ten\xi}\to 0, \ \ \ \xi\in\LTQ.$$
\end{lem}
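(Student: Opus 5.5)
We will rely on the standard characterization of co-amenability from Bédos–Tuset: $\G$ is co-amenable if and only if there is a net of unit vectors $(\zeta_i)$ in $\LTQ$ with $\norm{W(\zeta_i\ten\xi)-\zeta_i\ten\xi}\to 0$ for all $\xi\in\LTQ$. Equivalently, $\LOQ$ has a bounded approximate identity of vector states $\om_{\zeta_i}|_{\LIQ}$. The plan is to take such a net, improve it so that the vectors lie in $\Lphi(\Nphi)$ and are $J$-invariant, and then transfer the almost-invariance from $W$ to $V$ using the relation (\ref{e:WV}) and the symmetry given by $J$.

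\textbf{Step 1: almost-invariant vectors in $\Lphi(\Nphi)$.}
By \cite[Theorem 2]{HNR}, co-amenability gives a bounded approximate identity $(f_i)$ of states in $\LOQ$. By the Bédos–Tuset argument, any state $f$ with $\norm{f\star g-g}$ small for many $g$ gives almost $W$-invariance of a vector representative of $f$.

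\begin{itemize}
\item We use the standard form: every normal state on $\LIQ$ is $\om_\zeta$ for a unique $\zeta$ in the positive cone $\mc{P}$. Vectors in $\mc{P}$ satisfy $J\zeta=\zeta$.
\item The positive cone contains the dense subset $\Lambda_\vphi(\Nphi\cap\Nphi^*)\cap\mc{P}$. Concretely, this contains the vectors $\Lphi(a)$ with $a$ positive and analytic, obtained by smoothing.
\item By the Powers–Størmer inequality, $\norm{\zeta-\zeta'}^2\le\norm{\om_\zeta-\om_{\zeta'}}$ on $\mc{P}$. So we may perturb $\zeta_i$ inside $\mc{P}$ to $\eta_i\in\Lphi(\Nphi)$ with $J\eta_i=\eta_i$, keeping almost-invariance under $W$.
\end{itemize}

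This handles the $W$-condition $\norm{W(\eta_i\ten\xi)-\eta_i\ten\xi}\to0$.

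\textbf{Step 2: transfer to $V$.}
From (\ref{e:WV}) and the identities $V=(\h{J}\ten J)\Sigma W^*\Sigma(\h{J}\ten J)$ (or $W=(J\ten J)\h V(J\ten J)$ combined with the antipode relation), the unitary $V$ is conjugate to a flipped and adjointed copy of $W$. This conjugation uses $J\ten J$ or $\h J\ten \h J$ on the legs, so it yields
$$\norm{V(\xi\ten\eta)-\xi\ten\eta}=\norm{W(J\eta\ten\xi')-J\eta\ten\xi'}$$
for a suitable $\xi'$ depending unitarily on $\xi$. The precise formula is of the type $V=(J\h J\ten J\h J)\,\Sigma W^*\Sigma\,(\h JJ\ten \h JJ)$ up to harmless unitaries acting only on the first leg.

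Since $J\eta_i=\eta_i$ and $\xi\mapsto\xi'$ is a bijective isometry, the $V$-condition follows from the $W$-condition. Alternatively, we can avoid exact identities: the state $\om_{\eta_i}$ is an approximate identity on both sides of $\LOQ$, and right almost-invariance of $\om_{\eta_i}$ gives almost $V$-invariance by the analogous Bédos–Tuset argument for the right regular representation. The point of $J\eta_i=\eta_i$ is that the same vector works for both.

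\textbf{Main obstacle.}
The hardest part is getting the leg/conjugation bookkeeping right in Step 2. Concretely, we must make sure that the relevant left/right vector representative of the state is the same vector. Using the positive-cone (hence $J$-fixed) representative should resolve this, since $\om_\zeta\circ R$ corresponds to $\h J$-related vectors. If the exact intertwining identity proves messy, we will fall back on the two-sided approximate identity argument, applied once to $W$ and once to $V$.
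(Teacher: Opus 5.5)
Your proof works and is essentially the paper's argument in the opposite order. The paper first gets $V$-almost-invariance from a co-unit $E$ with $(\id\ten E)V=1$, approximated weak* by states whose positive-cone representatives are perturbed into $\Lphi(\Nphi)$, and then derives the $W$-condition from (\ref{e:WV}). You start from $W$, using a bounded approximate identity of states so that $\lm(f_i)\to 1$ strongly, and pass to $V$. Several points in your write-up are wrong, though, and you have misidentified what does the work.

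In Step 2, no conjugation by $J$ or $\wh{J}$ acts on the $\eta$-leg, and $J\eta_i=\eta_i$ plays no role in passing between $W$ and $V$. The formula you guess for $V$ in terms of $\Sigma W^*\Sigma$ is not valid in general; it holds for commutative $\G$ but fails, e.g., for $\G_s$ with $G$ non-abelian. Relation (\ref{e:WV}), $V=\sigma(1\ten U)W(1\ten U^*)\sigma$, puts $U$ only on the $\xi$-leg, so
\[\norm{V(\xi\ten\eta)-\xi\ten\eta}=\norm{W(\eta\ten U^*\xi)-\eta\ten U^*\xi}\]
holds for every $\eta$. Your displayed identity is true for $J$-fixed $\eta$ only for this reason. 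Your fallback rests on the same fact: since $W\in\LIQ\oten\LIQH$ and $V\in\LIQHP\oten\LIQ$, both defects depend only on $f=\om_\eta|_{\LIQ}$, through $\lm(f)$ and $\rho(f)=U\lm(f)U^*$. So the ``main obstacle'' you describe does not exist. $J\eta_i=\eta_i$ is simply an extra property required by the statement, and it is used later, in Theorem \ref{t:bai}, to pass to $\wh{V}$.

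The same observation is what justifies replacing your $\zeta_i$ in Step 1 by the cone representative of $\om_{\zeta_i}|_{\LIQ}$: the $W$-defect is unchanged. The Powers--St\o rmer inequality is not what is needed there; you only need norm continuity of the defect in $\eta$ when perturbing.

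Finally, for $a\ge0$ the vector $\Lphi(a)$ is generally not in $\mc{P}$. It is fixed by the Tomita involution $\Lphi(x)\mapsto\Lphi(x^*)$, and it is $J$-fixed only if it is also fixed by the modular operator, e.g.\ when $\vphi$ is tracial. Use instead the vectors $\Lphi(x\sigma^{\vphi}_{i/2}(x)^*)$, which are dense in $\mc{P}$, as the paper does.
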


\begin{proof} It is well-known that co-amenability implies the existence of a (two-sided) co-unit $E\in\LIQ^*$ satisfying 
$$E(x\star f)=\la f, x\ra=E(f\star x), \ \ \ f\in\LOQ, \ x\in\LIQ.$$
The standard argument shows that $(\id\ten E)V=1$ (see, e.g., \cite[Theorem 3.1]{BT}). Approximate $E$ weak* by a net $(f_i)$ of states in $\LOQ$. Since $\LIQ$ is standardly represented on $\LTQ$, each state $f_i=\om_{\eta_i}|_{\LIQ}$ for a unique unit vector $\eta_i\in \mc{P}:=\mc{P}_\vphi=\overline{\{\pi_\vphi(x)J\Lphi(x)\mid x\in\mc{N}_{\vphi}\}}$. By norm density, we may assume $\eta_i=\Lphi(x_i)J\Lphi(x_i)$ for some $x_i\in\Nphi\cap\mc{D}(\sigma^\vphi_{i/2})$, in which case $\eta_i=\Lphi(x_i\sigma^\vphi_{i/2}(x_i)^*)\in\Lphi(\Nphi)$ and $J\eta_i=\eta_i$. Then, for every $\xi\in\LTQ$,
$$\la V(\xi\ten\eta_i),\xi\ten\eta_i\ra\to\la (\id\ten E)(V)\xi,\xi\ra=\norm{\xi}^2,$$
from which it follows that $\norm{V(\xi\ten\eta_i)-\xi\ten\eta_i}\to 0$.

Formula \ref{e:WV} then implies
\begin{align*}\norm{W(\eta_i\ten\xi)-\eta_i\ten\xi}&=\norm{(1\ten U^*)\sigma V\sigma(1\ten U)(\eta_i\ten\xi)-\eta_i\ten\xi}\\
&=\norm{V(U\xi\ten\eta_i)-U\xi\ten\eta_i}\\
&\to0.
\end{align*}


\end{proof}

\begin{cor}\label{c:central} Let $\G$ be a locally compact quantum group. If $\G$ is co-amenable, there exists a net $(\eta_i)$ in $\Lphi(\Nphi)_{\norm{\cdot}=1}$, satisfying
$$\norm{\om_{\eta_i}\star\rho-\rho\bullet\om_{\eta_i}}\rightarrow0, \ \rho\in\TCQ.$$
\end{cor}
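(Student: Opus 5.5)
Take the net $(\eta_i)$ from Lemma \ref{l:jhat}, so each $\eta_i$ is a unit vector with $\norm{V(\xi\ten\eta_i)-\xi\ten\eta_i}\to0$ and $\norm{W(\eta_i\ten\xi)-\eta_i\ten\xi}\to0$ for all $\xi\in\LTQ$. The plan is to show that, asymptotically, $\om_{\eta_i}\star\rho$ and $\rho\bullet\om_{\eta_i}$ both approach $\rho$. Then their difference tends to zero.

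\textbf{Step 1: the $\star$ side.} By linearity and the uniform bound $\norm{\om_{\eta_i}\star\rho}\le\norm{\rho}$, it suffices to take $\rho=\om_{\xi,\zeta}$ a rank-one functional $T\mapsto\la T\xi,\zeta\ra$; the general case follows by density of finite-rank operators. For $T\in\BLTQ$,
$$\la\om_{\eta_i}\star\om_{\xi,\zeta},T\ra=\la V(T\ten1)V^*(\eta_i\ten\xi),\eta_i\ten\zeta\ra .$$
Applying the flip $\sigma$, this involves $V^*$ acting on $\eta_i$ in the \emph{first} leg. The lemma, however, controls $V$ on $\xi\ten\eta_i$ (second leg) and $W$ on $\eta_i\ten\xi$ (first leg). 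I will therefore use the identity $V=\sigma(1\ten U)W(1\ten U^*)\sigma$ from (\ref{e:WV}) to convert the expression. Alternatively, I will rewrite $m_\star$ as the opposite-side product and use the form of the lemma that puts $\eta_i$ in the correct leg.

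The cleaner route is to compute directly. Since $\la\rho\star\om_{\eta_i},T\ra=\la V(T\ten1)V^*(\xi\ten\eta_i),\zeta\ten\eta_i\ra$, the estimate $V^*(\xi\ten\eta_i)\approx\xi\ten\eta_i$ (which follows from the $V$-estimate because $V$ is unitary) gives $\rho\star\om_{\eta_i}\to\rho$ in norm, uniformly over $\norm{T}\le1$. So the natural statement is on the right for $\star$.

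\textbf{Step 2: the $\bullet$ side.} From $\wh V=(J\ten J)W(J\ten J)$ and $J\eta_i=\eta_i$, the $W$-estimate $W(\eta_i\ten\xi)\approx\eta_i\ten\xi$ gives $\wh V^*(\eta_i\ten J\xi)\approx\eta_i\ten J\xi$. Hence $\om_{\eta_i}\bullet\rho\to\rho$, again in norm and uniformly in $T$.

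\textbf{Step 3: matching the order and combining.} Steps 1 and 2 give $\rho\star\om_{\eta_i}$ and $\om_{\eta_i}\bullet\rho$, whereas the statement needs $\om_{\eta_i}\star\rho$ and $\rho\bullet\om_{\eta_i}$. These reversed-order products are where the main difficulty lies.

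To handle them, I will use (\ref{e:products}), which gives $\rho\bullet\om_{\eta_i}=m_\star(\Ad(W^*)(\om_{\eta_i}\ten\rho))$. Writing $\rho=\om_{\xi,\zeta}$, we have $W^*(\eta_i\ten\xi)\approx\eta_i\ten\xi$ and likewise with $\zeta$. Hence
$$\norm{\Ad(W^*)(\om_{\eta_i}\ten\rho)-\om_{\eta_i}\ten\rho}\to0,$$
and so $\rho\bullet\om_{\eta_i}-\om_{\eta_i}\star\rho\to0$ directly. This approach compares the two reversed products to each other rather than to $\rho$, which is exactly what the corollary requires.

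The \textbf{key estimate} is therefore the one above for $\Ad(W^*)$ on $\om_{\eta_i}\ten\om_{\xi,\zeta}$. It follows from the lemma: the difference of the two rank-one tensors is bounded by $\norm{W^*(\eta_i\ten\xi)-\eta_i\ten\xi}\norm{\zeta}+\norm{\xi}\norm{W^*(\eta_i\ten\zeta)-\eta_i\ten\zeta}$. Contractivity of $m_\star$ and density of finite-rank operators then finish the argument. Steps 1 and 2 thus become unnecessary, and the proof reduces to this estimate together with (\ref{e:products}).
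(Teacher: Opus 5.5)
Your final argument in Step 3 is correct and is exactly the paper's proof. You rewrite $\rho\bullet\om_{\eta_i}=m_\star(W^*(\om_{\eta_i}\ten\rho)W)$ via (\ref{e:products}), apply the $W$-estimate of Lemma \ref{l:jhat} to rank-one $\rho$ together with contractivity of $m_\star$, and conclude by linearity and density; the paper uses vector states $\om_\xi$ rather than $\om_{\xi,\zeta}$, which is immaterial. Drop the opening plan and Steps 1--2 from any write-up, because they are not merely superfluous but partly false. The $\wh V$-estimate in Step 2 yields $\om_{\eta_i}\bullet\rho-\rho(1)\om_{\eta_i}\to0$, not $\om_{\eta_i}\bullet\rho\to\rho$. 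Moreover, in general neither $\om_{\eta_i}\star\rho$ nor $\rho\bullet\om_{\eta_i}$ tends to $\rho$: for a finite group with $\eta_i=\delta_e$, both equal the diagonal part of $\rho$.
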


\begin{proof} By Lemma \ref{l:jhat}, co-amenability implies the existence of a net $(\eta_i)$ of unit vectors in $\LTQ$ satisfying 
$$\norm{W(\eta_i\ten\xi)-\eta_i\ten\xi}\to0, \ \ \ \xi\in\LTQ.$$
By Proposition \ref{p:products}, for each $i$, we then have
\begin{align*}\om_{\xi}\bullet\om_{\eta_i}&=m_{\bullet}(\om_{\xi}\ten\om_{\eta_i})\\
&=m_{\star}\circ\Ad(W^*)\circ\Sigma(\om_{\xi}\ten\om_{\eta_i})\\
&=m_{\star}(W^*(\eta_i\eta_i^*\ten\xi\xi^*)W).
\end{align*} 
Hence,
$$\om_{\xi}\bullet\om_{\eta_i}-\om_{\eta_i}\star\om_{\xi}=m_{\star}(W^*(\eta_i\eta_i^*\ten\xi\xi^*)W-\eta_i\eta_i^*\ten\xi\xi^*)\to0.$$
By linearity, boundedness and density, the claim follows.
\end{proof}.

\begin{thm}\label{t:bai} $(\TCQ_0,\ostar)$ has a bounded approximate identity if and only if $\G$ and $\wh{\G}$ are co-amenable.
\end{thm}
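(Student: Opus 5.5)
For the forward direction I would use the quotient maps of Remark \ref{r:TCQ_0}(3). If $(E_\alpha)$ is a bounded approximate identity for $(\TCQ_0,\ostar)$, then $e_\alpha:=\frac12\pi_0(E_\alpha)$ is bounded in $\LOQ_0$. Since $\frac12\pi_0$ is a surjective homomorphism onto $(\LOQ_0,\star)$, the net $(e_\alpha)$ is a bounded approximate identity for $\LOQ_0$. Similarly, $-\frac12\hat\pi_0$ is a surjective homomorphism onto $(\LOQH_0,\bullet)$, so $\LOQH_0$ also has a bounded approximate identity.

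I do not want to conclude amenability from this, since that is not the claim. Instead I would pass from $\LOQ_0$ to $\LOQ$ directly. Fix a state $h$ and define $u_\alpha:=h+e_\alpha-e_\alpha\star h$. For $f\in\LOQ$, write $f=f_0+f(1)h$ with $f_0\in\LOQ_0$. Then
$$u_\alpha\star f=f(1)h+e_\alpha\star f_0+f(1)e_\alpha\star h-e_\alpha\star h\star f_0-f(1)e_\alpha\star h\star h .$$
Since $h\star f_0$ and $h\star h-h$ lie in $\LOQ_0$, the terms combine and the expression tends to $f$. This gives a bounded left approximate identity for $\LOQ$, hence co-amenability of $\G$. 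The same argument gives co-amenability of $\wh\G$.

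For the converse, assume both $\G$ and $\wh\G$ are co-amenable. Guided by the finite example, where the identity is $E=2(\om_{\delta_e}-\om_\chi)$, I would take
$$E_{i,j}:=2(\om_{\eta_i}-\om_{\zeta_j}),$$
with the $\eta_i$ from Corollary \ref{c:central} for $\G$ and the $\zeta_j$ from the analogous corollary for $\wh\G$. Applying the corollary to $\wh\G$ swaps the roles of $\star$ and $\bullet$, so it gives $\norm{\om_{\zeta_j}\bullet\rho-\rho\star\om_{\zeta_j}}\to0$. Each $E_{i,j}$ lies in $\TCQ_0$ and has norm at most $4$. I would then compute
$$2E\ostar\rho=2(\om_\eta\star\rho-\rho\bullet\om_\eta)-2(\om_\zeta\star\rho-\rho\bullet\om_\zeta).$$
The first bracket tends to $0$ by Corollary \ref{c:central}. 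Writing the second as $\om_\zeta\star\rho-\rho\star\om_\zeta+(\rho\star\om_\zeta-\rho\bullet\om_\zeta)$ does not immediately decouple, so more structure is needed.

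This is where the main obstacle lies. I need $\rho\star\om_{\eta_i}\to\rho$ and $\om_{\eta_i}\star\rho\to\rho$, where $\om_{\eta_i}$ approximates the co-unit of $\G$, and I need $\rho\bullet\om_{\zeta_j}\to\rho$ and $\om_{\zeta_j}\bullet\rho\to\rho$ for $\wh\G$. I also need to understand mixed terms such as $\om_\eta\bullet\rho$. My plan is to show that co-amenability of $\G$ gives $\norm{V(\xi\ten\eta_i)-\xi\ten\eta_i}\to0$ (Lemma \ref{l:jhat}), which yields $\rho\star\om_{\eta_i}\to\rho$. I would then use Corollary \ref{c:central} to convert this into $\om_{\eta_i}\bullet\rho\to\rho$. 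Next, I would show that $\om_{\eta_i}$ is asymptotically a "trace-like" element for $\bullet$, meaning $\rho\bullet\om_{\eta_i}\to\tr(\rho)\,\omega_0$ for a fixed state-like limit. This is the delicate point: in the finite case $\delta_e$ is the co-unit for $\G$ and $\chi$ is the Haar vector for $\wh\G$.

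Once the four asymptotic relations
$$\om_\eta\star\rho\approx\rho\bullet\om_\eta,\quad \rho\star\om_\eta\approx\rho,\quad \om_\zeta\bullet\rho\approx\rho\star\om_\zeta,\quad \rho\bullet\om_\zeta\approx\rho$$
are in hand, together with the vanishing of terms like $\om_\zeta\star\rho$ on $\TCQ_0$ in the limit, I would expand $2E_{i,j}\ostar\rho$ and $2\rho\ostar E_{i,j}$ term by term. The goal is to see them tend to $2\rho$, with the factor $\frac12$ absorbing the 2. I would take the limit along the product net in $(i,j)$. This gives a two-sided bounded approximate identity bounded by $4$.
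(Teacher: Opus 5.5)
\textbf{Direction ``bounded approximate identity $\Rightarrow$ co-amenability''.} This part is correct and takes a more elementary route than the paper. The paper also pushes the approximate identity down to $\LOQ_0$ and $\LOQH_0$, but then appeals to an external theorem: a bounded approximate identity in the augmentation ideal forces amenability and co-amenability. Your trick $u_\alpha=h+e_\alpha-e_\alpha\star h$ gives a bounded left approximate identity of $\LOQ$ directly, and that is all the statement needs.

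Your displayed expansion is wrong, though. The term $h\star f$ contributes $h\star f_0+f(1)h\star h$, not $f(1)h$. With the correct expansion the limit is $h\star f_0+f(1)h\star h+f_0-h\star f_0+f(1)(h-h\star h)=f$, using exactly the two membership facts you cite. Also, $\hat\pi(\rho\star\tau)=\tau(1)\hat\pi(\rho)$, so $-\frac12\hat\pi_0$ is an anti-homomorphism onto $(\LOQH_0,\bullet)$. This is harmless for a two-sided approximate identity.

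\textbf{Direction ``co-amenability $\Rightarrow$ bounded approximate identity''.} Here there is a genuine gap. Your $E_{i,j}$ is the paper's net $-2\rho_{i,j}$ (your $\zeta_j$ is the paper's $\xi_j$), and the expansions decouple:
\begin{align*}
E_{i,j}\ostar\rho&=(\om_{\eta_i}\star\rho-\rho\bullet\om_{\eta_i})-(\om_{\zeta_j}\star\rho-\rho\bullet\om_{\zeta_j}),\\
\rho\ostar E_{i,j}&=(\rho\star\om_{\eta_i}-\om_{\eta_i}\bullet\rho)+(\om_{\zeta_j}\bullet\rho-\rho\star\om_{\zeta_j}).
\end{align*}
So besides your four relations, you need exactly $\om_{\zeta_j}\star\rho\to0$ and $\om_{\eta_i}\bullet\rho\to0$ for $\rho\in\TCQ_0$. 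You leave precisely this unproved.

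Worse, the route you sketch passes through false statements.
\begin{itemize}
\item $\om_{\eta_i}\star\rho\to\rho$ and $\om_{\zeta_j}\bullet\rho\to\rho$ are impossible for nontrivial $\G$. Indeed $\om\star\tau=0$ for $\tau\in\LIQ_\perp\subseteq\TCQ_0$, and likewise for $\bullet$ and $\LIQH_\perp$.
\item Converting to $\om_{\eta_i}\bullet\rho\to\rho$ goes the wrong way: on $\TCQ_0$ this term tends to $0$.
\item $\rho\bullet\om_{\eta_i}$ does not tend to $\tr(\rho)\,\om_0$. For a finite group with $\eta=\delta_e$, $\rho\bullet\om_{\delta_e}$ is the diagonal part of $\rho$.
\end{itemize}
The correct trace-like relation has $\om_{\eta_i}$ on the other side and no fixed limit: $\norm{\om_{\eta_i}\bullet\rho-\tr(\rho)\om_{\eta_i}}\to0$. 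In the finite case, $\om_{\delta_e}\bullet\rho=\tr(\rho)\om_{\delta_e}$ exactly.

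\textbf{The missing idea.} This relation needs invariance in the \emph{left} leg, $\wh{V}(\eta_i\ten\xi)\approx\eta_i\ten\xi$. Co-amenability only gives right-leg invariance, $V(\xi\ten\eta_i)\approx\xi\ten\eta_i$. The paper transfers one to the other using two facts: $\eta_i$ was chosen in the positive cone, so $J\eta_i=\eta_i$ (Lemma \ref{l:jhat}); and $V=(\wh{J}\ten J)\sigma\wh{V}\sigma(\wh{J}\ten J)$, which follows from (\ref{e:hat}) and (\ref{e:WV}). Dually, $\wh{J}\zeta_j=\zeta_j$ converts $\wh{V}(\xi\ten\zeta_j)\approx\xi\ten\zeta_j$ into $V(\zeta_j\ten\xi)\approx\zeta_j\ten\xi$. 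That gives $\norm{\om_{\zeta_j}\star\rho-\tr(\rho)\om_{\zeta_j}}\to0$.

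With these two facts, your decoupled expansion yields $E_{i,j}\ostar\rho\to\rho$ and $\rho\ostar E_{i,j}\to\rho$ along the product net. Without the invariance under the modular conjugations, nothing in your plan controls these terms.
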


\begin{proof} Combining Lemma \ref{l:jhat} and (the proof of) Corollary \ref{c:central}, there exists a net $(\eta_i)$ of unit vectors in $\LTQ$ satisfying
\begin{enumerate}[label=(\alph*)]
\item $\norm{V(\xi\ten\eta_i)-\xi\ten\eta_i}\to 0$, $\xi\in\LTQ$; 
\item $\norm{\om_{\eta_i}\star\rho-\rho\bullet\om_{\eta_i}}\to 0$, $\rho\in\TCQ$;
\item $J\eta_i=\eta_i$.
\end{enumerate}
Since $\wh{\G}$ is also co-amenable, the same results imply the existence of a net $(\xi_j)$ of unit vectors in $\LTQ$ satisfying
\begin{enumerate}[label=(\alph*')]
\item $\norm{\wh{V}(\xi\ten\xi_j)-\xi\ten\xi_j}\to 0$, $\xi\in\LTQ$; 
\item $\norm{\om_{\xi_j}\bullet\rho-\rho\star\om_{\xi_j}}\to 0$, $\rho\in\TCQ$;
\item $\wh{J}\xi_j=\xi_j$.
\end{enumerate}
Consider $\rho_{i,j}:=\om_{\xi_j}-\om_{\eta_i}\in\TCQ_0$. Then 
\begin{align*}\tau\ostar\rho_{i,j}&=\frac{1}{2}(\tau\star\rho_{i,j}-\rho_{i,j}\bullet\tau)\\
&=\frac{1}{2}(\tau\star\om_{\xi_j}-\tau\star\om_{\eta_i}-\om_{\xi_j}\bullet\tau+\om_{\eta_i}\bullet\tau).
\end{align*}
By property (b') of the net $(\om_{\xi_j})$, 
$$\lim_j\tau\ostar\rho_{i,j}=\frac{1}{2}(\om_{\eta_i}\bullet\tau-\tau\star\om_{\eta_i}).$$
Next, property $(a)$ of the net $(\om_{\eta_i})$ implies
$$\norm{\om_\xi\star\om_{\eta_i}-\om_\xi}=\norm{(\id\ten\tr)V^*(\xi\xi^*\ten\eta_i\eta_i^*)V-\xi\xi^*}\to0, \ \ \ \xi\in\LTQ.$$
By linearity, density and boundedness, $\rho\star\om_{\eta_i}\to\rho$ for all $\rho\in\TCQ$. Finally, relations (\ref{e:hat}) and (\ref{e:WV}) imply $V=(\wh{J}\ten J)\sigma\wh{V}\sigma(\wh{J}\ten J)$, so that properties (a) and (c) of the net $(\om_{\eta_i})$ imply
$$\norm{\wh{V}(\eta_i\ten\xi)-\eta_i\ten\xi}=\norm{V(\wh{J}\xi\ten\eta_i)-\wh{J}\xi\ten\eta_i}\to 0, \ \ \ \xi\in\LTQ.$$
The above convergence implies that
$$\norm{\om_{\eta_i}\bullet\om_\xi-\om_\xi(1)\om_{\eta_i}}=\norm{(\id\ten\tr)\wh{V}^*(\eta_i\eta_i^*\ten\xi\xi^*)\wh{V}-\om_{\xi}(1)\om_{\eta_i}}\to0, \ \ \ \xi\in\LTQ,$$
which, again by the standard argument shows that $\norm{\om_{\eta_i}\bullet\rho-\rho(1)\om_{\eta_i}}\to0$ for all $\rho\in\TCQ$. In particular, when $\tau\in\TCQ_0$, $\norm{\om_{\eta_i}\bullet\tau}\to0$. Putting these facts together, we see that
$$\lim_i\lim_j\tau\ostar\rho_{i,j}=\frac{1}{2}(\om_{\eta_i}\bullet\tau-\tau\star\om_{\eta_i})=-\frac{1}{2}\tau.$$
Thus, combining the iterated limit into a single net, $(-2\rho_{i,j})$ forms a bounded right approximate identity.

The argument for the left hand side is completely analogous.
\begin{align*}\rho_{i,j}\ostar\tau&=\frac{1}{2}(\rho_{i,j}\star\tau-\tau\bullet\rho_{i,j})\\
&=\frac{1}{2}(\om_{\xi_j}\star\tau-\om_{\eta_i}\star\tau-\tau\bullet\om_{\xi_j}+\tau\bullet\om_{\eta_i}).
\end{align*}
Property (a') of the net $(\om_{\xi_j})$ implies $\tau\bullet\om_{\xi_j}\to\tau$. Property (a') and (c') imply 
$$\norm{V(\xi_j\ten\xi)-\xi_j\ten\xi}=\norm{\wh{V}(J\xi\ten\xi_j)-J\xi\ten\xi_j}\to0, \ \ \ \xi\in\LTQ,$$
which leads to $\norm{\om_{\xi_j}\star\tau}\to0$ for $\tau\in\TCQ_0$. Combining this with property (b) of the net $(\om_{\eta_i})$, we see that
$$\lim_i\lim_j\rho_{i,j}\ostar\tau=\lim_i\frac{1}{2}(\tau\bullet\om_{\eta_i}-\om_{\eta_i}\star\tau-\tau)=-\frac{1}{2}\tau.$$
Thus, $(-2\rho_{i,j})$ forms a bounded approximate identity.

For the converse, suppose that $\TCQ_0$ has a bounded approximate identity $(\rho_i)$. Let $f_i=\pi_0(\rho_i)\in\LOQ_0$. Since $\pi_0$ is a quotient map onto $\LOQ_0$, given $g\in\LOQ_0$, pick $\rho\in\TCQ_0$ such that $\pi_0(\rho)=g$. Note that $\pi(\om\bullet\tau)=0$ for all $\om,\tau\in\TCQ_0$:
$$\la\pi(\om\bullet\tau),x\ra=\la\om\ten\tau,\wh{\Gamma}(x)\ra=\la\om\ten\tau,x\ten 1\ra=\om(x)\tau(1)=0, \ \ \ x\in\LIQ.$$
Hence,
\begin{align*}g\star f_i&=\pi_0(\rho\star \rho_i)=2\pi_0(\frac{1}{2}(\rho\star\rho_i-\rho_i\bullet\rho))=2\pi_0(\rho\ostar\rho_i)\\
&\to2\pi_0(\rho)=2g.
\end{align*}
Similarly, $f_i\star g\to 2g$. Hence, $(\frac{1}{2}f_i)$ forms a bounded approximate identity for $\LOQ_0$. By \cite[Theorem 4.10]{L} (see also \cite[Theorem 4.1]{L}), this entails that $\G$ is both amenable and co-amenable.

By symmetry, $(\frac{1}{2}\hat{\pi}(\rho_i))$ forms a bounded approximate identity for $\LOQH_0$, forcing $\wh{\G}$ to be co-amenable and amenable.

\end{proof} 

\begin{cor} Let $\G$ be a locally compact quantum group. Then $\TCQ_0$ is unital if and only if $\G$ is finite.\end{cor}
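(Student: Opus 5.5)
For the \emph{if} direction, let $\G$ be finite. Then $\LIQ$ and $\LIQH$ are finite-dimensional, so both $\G$ and $\wh{\G}$ are compact and discrete; in particular both are co-amenable. I would re-run the construction in the proof of Theorem \ref{t:bai}, now with the nets replaced by single vectors. Take $\eta$ to be the unit vector implementing the co-unit of $\G$. In the finite case $\LOQ$ is unital, with unit the co-unit $\epsilon$, and since $\LIQ$ is standard on $\LTQ$ we can choose $\eta$ in the positive cone with $J\eta=\eta$ and $\om_\eta|_{\LIQ}=\epsilon$. The standard argument $(\id\ten\epsilon)V=1$ then gives $V(\xi\ten\eta)=\xi\ten\eta$ for all $\xi$: an operator $V$ with $\la V(\xi\ten\eta),\xi\ten\eta\ra=\norm{\xi}^2$ fixes $\xi\ten\eta$. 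In the same way I pick $\xi_0$ for $\wh{\G}$ with $\wh{J}\xi_0=\xi_0$ and $\wh{V}(\xi\ten\xi_0)=\xi\ten\xi_0$.

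Each approximate identity in the proof of Theorem \ref{t:bai} then holds exactly:
\begin{itemize}
\item $\rho\star\om_\eta=\rho$;
\item $\om_\eta\bullet\rho=\rho(1)\om_\eta$;
\item $\om_\eta\star\rho=\rho\bullet\om_\eta$, by the exact version of Corollary \ref{c:central}, which uses $W(\eta\ten\xi)=\eta\ten\xi$ from (\ref{e:WV});
\item the dual statements for $\xi_0$.
\end{itemize}
The computation in that proof then shows that $E=-2(\om_{\xi_0}-\om_\eta)\in\TCQ_0$ is a two-sided identity for $\ostar$. This matches the formula $2(\om_{\delta_e}-\om_\chi)$ in the group example. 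The only real work is verifying these exact identities, which are the limit statements of Lemma \ref{l:jhat} with equality.

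For the \emph{only if} direction, suppose $E$ is a unit for $(\TCQ_0,\ostar)$. A unit is a bounded approximate identity, so Theorem \ref{t:bai} and its proof give more. Applying $\pi_0$ as in the converse part of that proof gives $g\star\tfrac12\pi_0(E)=g=\tfrac12\pi_0(E)\star g$ for $g\in\LOQ_0$. So $\LOQ_0$ is unital, with unit $e_0$. Since $\G$ is co-amenable, take a state $h$ and a bounded approximate identity $(h_i)$ of $\LOQ$. Then $e_0$ together with co-amenability should yield that $\LOQ$ itself is unital. Concretely, I would show that $u=h-h\star e_0+e_0$, or a similar correction, is a unit for $\LOQ$. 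I would check this by writing $f=f_0+f(1)h$ and using that $\LOQ_0$ is an ideal. This gives $\G$ discrete. By symmetry, $\LOQH$ is unital too, so $\wh{\G}$ is discrete and hence $\G$ is compact.

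A quantum group that is both compact and discrete is finite: $\LIQ$ has a normal Haar state and $\LOQ$ is unital, so $\G$ is finite-dimensional by the known result for compact discrete quantum groups. I expect the main obstacle to be the step that upgrades a unit of the augmentation ideal $\LOQ_0$ to a unit of $\LOQ$. If the direct algebraic correction fails, I would fall back on the known fact that $\LOQ_0$ is unital only when $\G$ is compact and $\LOQ$ has an identity, via \cite{L} or \cite{HNR0}. Alternatively, I would note that the unit of $\LOQ_0$ forces the invariant mean from the proof of Theorem \ref{t:bai} to be normal, which gives compactness of $\G$; the symmetric argument applied to $\wh{\G}$ then gives discreteness of $\G$.
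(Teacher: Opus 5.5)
Your proposal is correct. The \emph{if} direction is the construction the paper has in mind. The corollary's written proof omits it, but the Example states that the proof of Theorem \ref{t:bai} yields the identity $2(\om_{\delta_e}-\om_\chi)$. For finite $\G$ the co-units are normal, so the nets of Lemma \ref{l:jhat} can be replaced by single cone vectors $\eta,\xi_0$, and every limit in the proof of Theorem \ref{t:bai} becomes an equality.

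In the \emph{only if} direction you and the paper both start by extracting a unit $e_0=\tfrac12\pi_0(E)$ of $\LOQ_0$, and symmetrically one for $\LOQH_0$. You then use it in dual ways.

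\textbf{Paper's route.} It turns $e_0$ into a nonzero right $\LOQ$-module map and appeals to a compactness criterion, getting $\G$ and $\wh{\G}$ compact. The paper writes this map as a scalar functional, but the operative map is $f\mapsto f-e_0\star f$, whose range is spanned by a normal invariant functional.

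\textbf{Your route.} You turn $e_0$ into a unit of $\LOQ$ and read off discreteness of $\G$ and $\wh{\G}$ directly from the definition. Your direct correction works, so the fallbacks are not needed.
\begin{itemize}
\item Since $\LOQ_0$ is an ideal, $a\star e_0=e_0\star a\star e_0=e_0\star a$, so $e_0$ is central.
\item Put $p:=h-e_0\star h$ and write $f=f_0+f(1)h$. Then $h\star f-e_0\star h\star f=f(1)p$, the mirror identity $f\star h-e_0\star f\star h=f(1)p$ holds, and $e_0\star f=f_0+f(1)\,e_0\star h$.
\item Hence $u=e_0+p=h-h\star e_0+e_0$ satisfies $u\star f=f=f\star u$.
\end{itemize}
Co-amenability plays no role in this step.

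\textbf{Comparison.} Your argument is purely algebraic: an identity of a codimension-one ideal splits off a complementary idempotent. It therefore avoids citing a compactness criterion, while the paper's argument is shorter once that criterion is granted.

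The two routes are really halves of one decomposition $\LOQ=\LOQ_0\oplus\C p$, where $p\star f=f\star p=f(1)p$ and $p(1)=1$. So unitality of $\LOQ_0$ alone gives discreteness of $\G$ (via $e_0+p$). Combined with the criterion that a normal invariant functional forces compactness, it also gives compactness of $\G$. The passage to $\wh{\G}$ is thus not strictly needed.
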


\begin{proof} Inspection of the proof of Theorem \ref{t:bai} shows that $\LOQ_0$ has a two-sided identity, call it $e_0$. Then $\LOQ\ni f\mapsto \la 1,f-e_0\star f\ra\in\C$ is a nonzero right $\LOQ$-module map, entailing the compactness of $\G$. Similarly, $\h{\G}$ is also compact, forcing $\G$ to be finite. 
\end{proof}

\section{Completely Bounded Multipliers}

For any locally compact quantum group $\G$, $\TCQ$ is a $(\TCQ_0,\ostar)$-bimodule by Remark \ref{r:TCQ_0}. In this section we show that this bimodule structure captures both $\McbQr$ and $\McbQHr$ via completely isometric anti-representations 
$$\Theta:\McbQr\hookrightarrow \ _{\ostar}\mc{CB}(\TCQ) \ \ \textnormal{and} \ \ \wh{\Theta}:\McbQHr\hookrightarrow\mc{CB}_{\ostar}(\TCQ).$$ 

Recall that an element $\hat{b}'\in\LIQHP$ is a \e{completely bounded right multiplier} of $\LOQ$ if $\rho(f)\hat{b}'\in\rho(\LOQ)$ for all $f\in\LOQ$ and the induced map
\begin{equation*}m_{\hat{b}}^r:\LOQ\ni f\mapsto\rho^{-1}(\rho(f)\hat{b}')\in\LOQ\end{equation*}
is completely bounded on $\LOQ$. We let $\McbQr$ denote the space of completely bounded right multipliers of $\LOQ$, which is a completely contractive Banach algebra with respect to the norm
\begin{equation*}\norm{[\hat{b}'_{ij}]}_{M_n(\McbQr)}=\norm{[m^r_{\hat{b}'_{ij}}]}_{cb}.\end{equation*} 

Given $\hat{b}'\in\McbQr$, the adjoint $\Theta^r(\hat{b}'):=(m_{\hat{b}}^r)^*$ defines a normal completely bounded right $\LOQ$-module map on $\LIQ$, and extends canonically to a normal completely bounded map on $\BLTQ$ \cite[Proposition 4.3]{JNR}. When $\hat{b}'=\rho(f)$, for some $f\in\LOQ$, the map $\Theta^r(\rho(f))$ and its extension are nothing but the convolution actions of $\LOQ$ on $\LIQ$ and $\BLTQ$, respectively, that is,
$$\Theta^r(\rho(f))(x)=(\id\ten f)\Gam(x), \ \ \ \Theta^r(\rho(f))(T)=(\id\ten f)\Gam^r(T), \ \ \ x\in\LIQ, \ T\in\BLTQ.$$
Moreover, the map
\begin{equation}\label{e:Theta}\Theta^r:\McbQr\cong\mc{CB}^{\sigma}_{\star}(\LIQ)\cong\mc{CB}_{\LIQH}^{\sigma,\LIQ}(\BLTQ)\end{equation}
induces a completely isometric isomorphism of completely contractive Banach algebras \cite[Theorem 4.5]{JNR} (we refer to \cite{JNR} for the latter notation).

When $\G$ is co-amenable, $\McbQr=\rho(M(\G))$ \cite[Theorem 4.2]{HNR2}, so we will denote multipliers $\hat{b}'$ simply by $\mu\in M(\G)$. 

We begin with the following observation.

\begin{prop} The maps 
\begin{align*}\Theta&:\McbQr\ni\hat{b}'\mapsto\Theta^r(\hat{b}')_*\in \ _{\ostar}\mc{CB}(\TCQ)\\
\wh{\Theta}&:\McbQHr\ni\hat{\hat{b}'}\mapsto\wh{\Theta}^r(\hat{b}')_*\in \ \mc{CB}_{\ostar}(\TCQ).\end{align*} are completely isometric anti-representations.
\end{prop}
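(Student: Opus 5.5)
The plan has three parts: show that $\Theta(\hat b')$ is a left $\ostar$-module map, show that $\Theta$ is completely isometric, and show that it reverses products. The dual statements for $\wh{\Theta}$ then follow by swapping $\G$ and $\wh{\G}$ and using Remark \ref{r:TCQ_0}(2).

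\textbf{Module property.} Fix $\hat b'\in\McbQr$ and write $\Phi=\Theta^r(\hat b')$ for its normal extension to $\BLTQ$ from (\ref{e:Theta}). By (\ref{e:Theta}), $\Phi$ is a $\LIQH$-bimodule map on $\BLTQ$ and commutes with the right convolution action: $\Phi\circ(\id\ten\tau)\Gam^r=(\id\ten\tau)\Gam^r\circ\Phi$. Passing to preadjoints, this intertwining yields
$$\Phi_*(\rho\star\tau)=\rho\star\Phi_*(\tau)$$
(or the mirror identity with $\Phi_*$ on the other factor; I will pin down which side from the $\LOQ$-module map property). It remains to handle the $\bullet$-term, i.e. to show $\Phi_*(\tau\bullet\rho)=\Phi_*(\tau)\bullet\rho$, or the appropriate counterpart. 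I will deduce this from the $\LIQH$-bimodule property. Since $\wh{\Gam}^r(T)=\wh V(T\ten 1)\wh V^*$ and $\Phi$ commutes with $\LIQH$ on both sides, I expect $(\Phi\ten\id)\wh{\Gam}^r=\wh{\Gam}^r\circ\Phi$. The justification is that $\wh V\in\LIQH'\oten\LIQH$ or $\LIQH\oten\LIQH'$ (depending on the leg convention), so that $\Phi\ten\id$ passes through the legs of $\wh V$ lying in $\LIQH$. I will verify this with a weak$^*$-density argument on elementary tensors, using normality of $\Phi$. Combining the two identities gives
$$\Phi_*(\rho\ostar\tau)=\rho\ostar\Phi_*(\tau),$$
and by the same computation this holds for all $\rho\in\TCQ_0$ and all $\tau\in\TCQ$. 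This establishes that $\Theta(\hat b')$ is a left $\ostar$-module map.

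\textbf{Complete isometry.} Preadjoints of normal maps satisfy $\norm{\Phi_*}_{cb}=\norm{\Phi}_{cb}$, at all matrix levels. By (\ref{e:Theta}), $\hat b'\mapsto\Theta^r(\hat b')$ is a complete isometry into $\mc{CB}^\sigma(\BLTQ)$, so $\Theta$ is completely isometric.

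\textbf{Anti-multiplicativity.} Taking preadjoints reverses composition: $(\Phi\Psi)_*=\Psi_*\Phi_*$. Since $\Theta^r$ is a homomorphism, $\Theta$ is an anti-homomorphism.

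\textbf{Main obstacle.} The delicate point is the $\bullet$ compatibility. I must confirm exactly which legs of $\wh V$ lie in $\LIQH$ versus $\LIQH'$. If the needed commutation only appears on the wrong side, I will instead rewrite $\bullet$ via Proposition \ref{p:products} as $m_\bullet=m_\star\circ\Ad(W^*)\circ\Sigma$, and use that $W\in\LIQ\oten\LIQH$ together with the $\LIQH$-module property of $\Phi$.
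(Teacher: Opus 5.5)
Your overall structure matches the paper's: complete isometry and anti-multiplicativity follow from (\ref{e:Theta}) (that is, \cite[Theorem 4.5]{JNR}) by passing to preadjoints, and those parts are fine. The module property then reduces to $\Phi_*$ being both a left $\star$-module map and a right $\bullet$-module map. The paper quotes these two facts from \cite[Theorem 4.1]{HNR4} and \cite[Proposition 4.2]{C}. You try to verify them directly, and both of your primary justifications need correcting.

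\textbf{The $\star$-part.} The intertwining you wrote, $\Phi\circ(\id\ten\tau)\Gam^r=(\id\ten\tau)\Gam^r\circ\Phi$, is on the wrong side. It would yield $\Phi_*(\rho\star\tau)=\Phi_*(\rho)\star\tau$, and for $\Phi=(\id\ten f)\Gam^r$ it would force $f$ to be central in $\LOQ$. What you actually need is $\Gam^r\circ\Phi=(\id\ten\Phi)\circ\Gam^r$. This is not literally contained in (\ref{e:Theta}), which only gives the module property on $\LIQ$. You obtain it on $\BLTQ$ from the following facts:
\begin{itemize}
\item $\Gam^r(\hat{x}x)=(\hat{x}\ten 1)\Gam(x)$, since $V\in\LIQH'\oten\LIQ$;
\item $\Phi(\hat{x}x)=\hat{x}\Phi(x)$;
\item normality of both sides;
\item weak$^*$ density of the span of $\LIQH\LIQ$.
\end{itemize}

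\textbf{The $\bullet$-part.} Your leg argument fails. In fact $\wh{V}=(J\ten J)W(J\ten J)\in\LIQ'\oten\LIQH$, so neither of your two guesses for the location of $\wh{V}$ is correct. The leg on which $\Phi\ten\id$ acts lies in $\LIQ'$, where $\Phi$ has no module property. For example, for $\G=\G_a$, $\Phi$ is an average of conjugations by $\rho(s)$, which do not commute with multiplication operators.

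Your fallback does close this gap. By Proposition \ref{p:products}, $\wh{\Gam}^r=\Sigma\circ\Ad(W)\circ\Gam^r$. Since $W\in\LIQ\oten\LIQH$ has its second leg in $\LIQH$, the bimodule property gives $(\id\ten\Phi)\circ\Ad(W)=\Ad(W)\circ(\id\ten\Phi)$. Using the corrected $\star$-intertwining, this yields
$(\Phi\ten\id)\wh{\Gam}^r=\Sigma\,\Ad(W)(\id\ten\Phi)\Gam^r=\Sigma\,\Ad(W)\Gam^r\Phi=\wh{\Gam}^r\Phi$.
Equivalently, you can use $\wh{\Gam}^r(x\hat{x})=(x\ten 1)\wh{\Gam}(\hat{x})$, together with $\Phi(\LIQ)\subseteq\LIQ$ and the right $\LIQH$-module property.

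With these repairs your argument is a valid, self-contained substitute for the two citations. It also shows something the paper's proof does not make visible: the $\bullet$-compatibility is a formal consequence of the $\star$-compatibility via Proposition \ref{p:products}.
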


\begin{proof} That $\Theta$ and $\wh{\Theta}$ are completely isometric anti-representations into $\mc{CB}(\TCQ)$ follows from \cite[Theorem 4.5]{JNR}. It remains to show the module properties. Given $\hat{b}'\in\McbQr$, $\rho\in\TCQ_0$ and $\tau\in\TCQ$, we have
$$\Theta(\hat{b}')(\rho_0\ostar\tau)=\frac{1}{2}(\Theta^r(\hat{b}')_*(\rho_0\star\tau)-\Theta^r(\hat{b}')_*(\tau\bullet\rho_0)).$$
Since $\Theta^r(\hat{b}')_*\in \ _{\star}\mc{CB}(\TCQ)$ \cite[Theorem 4.1]{HNR4} and $\Theta^r(\hat{b}')_*\in\mc{CB}_{\bullet}(\TCQ)$ \cite[Proposition 4.2]{C}, we see that
$$\Theta(\hat{b}')(\rho_0\ostar\tau)=\frac{1}{2}(\rho_0\star\Theta^r(\hat{b}')_*(\tau)-\Theta^r(\hat{b}')_*(\tau)\bullet\rho_0)=\rho_0\ostar\Theta(\hat{b}')(\tau).$$
The deduction of the module property of $\wh{\Theta}(b')$, $b'\in\McbQHr$, is completely analogous, appealing to the facts $\wh{\Theta}^r(b')_*\in \ _{\bullet}\mc{CB}(\TCQ)$ and $\wh{\Theta}^r(b')_*\in\mc{CB}_{\star}(\TCQ)$.
\end{proof}

We now characterize the ranges of these maps under amenability assumptions.

\begin{thm}\label{t:multipliers} Let $\G$ and $\wh{\G}$ be co-amenable. Then 
\begin{enumerate}
\item if $\G$ is not compact, $_{\ostar}\mc{CB}(\TCQ)=\Theta(\McbQr)$;
\item if $\G$ is compact with Haar state $\vphi$,  $_{\ostar}\mc{CB}(\TCQ)=\Theta(\McbQr)+\C\lambda(\vphi)\tr(\cdot)$;
\item if $\h{\G}$ is not compact, $\mc{CB}(\TCQ)_{\ostar}=\wh{\Theta}(\McbQHr)$;
\item if $\h{\G}$ is compact with Haar state $\h{\vphi}$, $\mc{CB}(\TCQ)_{\ostar}=\wh{\Theta}(\McbQHr)+\C\h{\lambda}(\h{\vphi})\tr(\cdot)$.
\end{enumerate}
\end{thm}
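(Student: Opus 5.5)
The plan is to prove (1) and (2) in detail; (3) and (4) follow by exchanging the roles of $\G$ and $\wh{\G}$, which by Remark \ref{r:TCQ_0}(2) replaces $\ostar$ by minus its opposite product and so turns left module maps into right ones. The inclusion $\Theta(\McbQr)\subseteq{}_{\ostar}\mc{CB}(\TCQ)$ is the preceding Proposition. In the compact case I will also check directly that $\Phi_0(\tau)=\tr(\tau)\lambda(\vphi)$ is a left $\ostar$-module map. Here $\lambda(\vphi)$ is read as a normal state on $\BLTQ$ extending $\vphi$, for instance the vector state of the cyclic vector $\Lphi(1)$. For $\rho_0\in\TCQ_0$ we have $\tr(\rho_0\ostar\tau)=0$, so the left-hand side $\Phi_0(\rho_0\ostar\tau)$ vanishes. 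On the other side, invariance of $\vphi$ should give $\rho_0\star\lambda(\vphi)=\rho_0(1)\lambda(\vphi)=0$ and $\lambda(\vphi)\bullet\rho_0=\rho_0(1)\lambda(\vphi)=0$. I expect this to require identifying $\Lphi(1)$ as an invariant vector for $V$ and $\wh{V}$ in the appropriate legs; if that fails, $\lambda(\vphi)$ must be chosen differently. The real work is the reverse inclusion.

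Fix $\Phi\in{}_{\ostar}\mc{CB}(\TCQ)$. I would first turn the $\ostar$-relation into separate relations for $\star$ and $\bullet$, using the nets from the proof of Theorem \ref{t:bai}. Let $(\eta_i)$ come from co-amenability of $\G$, so that $\om_{\eta_i}\star\tau-\tau\bullet\om_{\eta_i}\to0$ for every $\tau$, by Corollary \ref{c:central}. For any state $\om$, apply $\Phi(\rho_0\ostar\tau)=\rho_0\ostar\Phi(\tau)$ with $\rho_0=\om-\om_{\eta_i}$ and pass to the limit. Since $\Phi$ is bounded, this gives
$$\Phi(\om\star\tau-\tau\bullet\om)=\om\star\Phi(\tau)-\Phi(\tau)\bullet\om$$
for all $\om,\tau\in\TCQ$. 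To split this identity, I would substitute $\tau\bullet\sigma$ for $\tau$, with $\sigma\in\TCQ_0$. By (\ref{e:dr}), $\om\star(\tau\bullet\sigma)=0$. Using the net $(\xi_j)$ for $\wh{\G}$, which satisfies $\tau\bullet\om_{\xi_j}\to\tau$ and $\om_{\xi_j}\star\tau-\tau(1)\om_{\xi_j}\to0$, together with (\ref{e:cr}), I would show that $\Phi$ commutes with $\tau\mapsto\tau\bullet\om$ and with $\tau\mapsto\om\star\tau$ separately, up to error terms of the form $\tr(\tau)\cdot(\text{fixed element})$.

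Assume first that these error terms are zero, so that $\Phi$ is a left $\star$-module map on all of $\TCQ$. Then \cite[Theorem 4.1]{HNR4} gives $\Phi=\Theta(\hat{b}')$ for some $\hat{b}'\in\McbQr$, where $\McbQr=\rho(M(\G))$ by co-amenability. In general, $\Psi:=\Phi-\Theta(\hat b')$ should vanish on $\TCQ_0$. It should then reduce to $\Psi(\tau)=\tr(\tau)\sigma$, with $\sigma$ satisfying $\rho_0\star\sigma=0$ and $\sigma\bullet\rho_0=0$ for all $\rho_0\in\TCQ_0$. The first condition says $\pi(\sigma)$ is, up to scalar, a left-invariant normal functional on $\LIQ$, that is, a multiple of a finite Haar weight. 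If $\G$ is not compact, this forces $\pi(\sigma)=0$; I would then use the $\bullet$ condition together with the net $(\eta_i)$ to show $\sigma=0$, which gives (1). If $\G$ is compact, $\sigma$ should be a multiple of $\lambda(\vphi)$, which gives (2).

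The main obstacle is the splitting step: converting the single mixed identity into separate $\star$- and $\bullet$-module properties, and keeping precise track of the rank-one trace terms that the nets leave behind, since $\om_{\xi_j}$ itself does not converge. If the substitution $\tau\bullet\sigma$ does not close the argument, I would instead exploit $\rho_0\star(\tau\bullet\sigma)=0$ and $(\tau\bullet\sigma)\star\rho_0=(\tau\star\rho_0)\bullet\sigma$ to compare $\Phi$ on the subspace $\la\TCQ\bullet\TCQ_0\ra$ with its behaviour on $\la\TCQ\star\TCQ_0\ra$. Co-amenability of $\wh{\G}$ should make both of these subspaces large enough for the comparison to determine $\Phi$.
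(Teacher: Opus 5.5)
\textbf{There is a genuine gap: you never produce the multiplier $\hat b'$, and your compact-case computations are wrong.}

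\textbf{The extension step.} Your first reduction is correct: passing to the limit with $\rho_0=\om-\om_{\eta_i}$ is valid. As in the paper, it can be pushed to
$$\Phi(\tau\star\rho_0)=\tau\star\Phi(\rho_0), \qquad \tau\in\TCQ,\ \rho_0\in\TCQ_0,$$
by writing $\Phi(\rho_0)=\lim_i\bigl(\rho_0\star\Phi(\om_{\eta_i})-\Phi(\om_{\eta_i})\bullet\rho_0\bigr)$ and applying (\ref{e:dr}).

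The gap is what comes next. Fix a state $\om_1$ and set $d(\om):=\Phi(\om\star\om_1)-\om\star\Phi(\om_1)$. Then $d$ is independent of $\om_1$ and satisfies $d(\om\star\om')=\om\star d(\om')+\om'(1)d(\om)$. Moreover, $\Phi+\tr(\cdot)X$ is a left $\star$-module map on $\TCQ$ precisely when $d(\om)=\om\star X-\om(1)X$ for some $X\in\TCQ$.

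So ``assume first that these error terms are zero'' and ``$\Psi:=\Phi-\Theta(\hat b')$ should vanish on $\TCQ_0$'' both presuppose that this derivation is inner, which is the heart of the theorem. Nothing in your proposal produces $X$ or $\hat b'$. The nets do not supply it directly: $(\om_{\xi_j})$ has no norm limit unless $\G$ is compact, so cluster points of $\Phi(\om_{\xi_j})$ live only in $\BLTQ^*$.

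The paper's device is amenability of $\G$, which follows from co-amenability of $\wh{\G}$. An invariant mean $m$ and a state $\hat f\in\LOQH$ give a state $\tilde m=\hat f\circ\Theta^r(m)$ on $\BLTQ$ with $\tilde m(T\star\rho)=\rho(1)\tilde m(T)$. Hence $(\id-\tilde m)\circ i^*\circ\Phi^*$ is a possibly non-normal element of $\mc{CB}_\star(\BLTQ)=\Theta^r(\la\LIQ\star\LOQ\ra^*)$ [HNR2, Proposition 6.5]. Splitting both $n$ and $\tilde m\circ\Phi^*$ into normal and singular parts, and using normality of $\Phi^*$, yields $\Phi=\Theta^r(\mu)_*+\tr(\cdot)\om$ with $\mu\in M(\G)$ and $\om\in\TCQ$. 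You need this, or some other proof that the derivation above is inner with a normal $X$.

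\textbf{The compact case.} Here your claims are false. $\Lphi(1)$ is $V$-invariant in the first leg, not the second, and $\rho_0\star\lambda(\vphi)\neq 0$ in general. For finite $G$,
$$\rho_0\star\lambda(\vphi)=|G|^{-1}\sum_r\rho(r)^*\rho_0\rho(r),$$
which is nonzero for any off-diagonal matrix unit $\rho_0$.

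What is true is $\rho\star\lambda(\vphi)=\lambda(\vphi)\bullet\rho$ for all $\rho\in\TCQ$. Indeed, for compact $\G$ one has $W(\xi\ten\Lphi(1))=\xi\ten\Lphi(1)$, so Proposition \ref{p:products} gives $\lambda(\vphi)\bullet\rho=m_\star\bigl(W^*(\rho\ten\lambda(\vphi))W\bigr)=\rho\star\lambda(\vphi)$.

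Consequently $\tr(\cdot)\lambda(\vphi)$ is a left $\ostar$-module map that is \emph{not} a left $\star$-module map. So no splitting argument can make your error terms vanish in case (2). The correct constraint on the residual term is only $\rho_0\ostar\sigma=0$, i.e.\ $\rho_0\star\sigma=\sigma\bullet\rho_0$. The two separate vanishing conditions you list would exclude $\lambda(\vphi)$ and contradict (2).

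With the corrected constraint your endgame is repairable: left invariance of $\pi(\sigma)$ is still forced, since $\pi(\sigma\bullet\rho_0)=0$. Alternatively, argue as the paper does. The nets give $\sigma=\lim_j\sigma(1)\om_{\xi_j}$, so either $\sigma\in\TCQ_0$ and is killed by the bounded approximate identity of Theorem \ref{t:bai}, or $(\om_{\xi_j})$ converges to a right $\bullet$-identity. The latter forces $\G$ to be compact, and Powers--St\o rmer identifies the limit as $\lambda(\vphi)$.
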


\begin{proof} Let $\theta\in \ _{\ostar}\mc{CB}(\TCQ)$. Unfolding the product, one easily sees that
\begin{equation}\label{e:module}\theta(\rho_0\star\om)-\rho_0\star\theta(\om)=\theta(\om\bullet\rho_0)-\theta(\om)\bullet\rho_0\end{equation}
for all $\rho_0\in\TCQ_0$ and $\om\in\TCQ$. 

Since $\G$ is co-amenable, Lemma \ref{l:jhat} and Corollary \ref{c:central} imply the existence of a net $(\eta_i)$ of unit vectors in $\LTQ$ satisfying
\begin{enumerate}[label=(\alph*)]
\item $\norm{V(\xi\ten\eta_i)-\xi\ten\eta_i}\to 0$, $\xi\in\LTQ$; 
\item $\norm{\om_{\eta_i}\star\rho-\rho\bullet\om_{\eta_i}}\to 0$, $\rho\in\TCQ$;
\item $J\eta_i=\eta_i$.
\end{enumerate}

Then, as shown in the proof of \ref{t:bai}, we have $\norm{\rho-\rho\star\om_{\eta_i}}\to0$ and $\norm{\om_{\eta_i}\bullet\rho-\rho(1)\om_{\eta_i}}\to0$ for all $\rho\in\TCQ$. In particular, when $\rho_0\in\TCQ_0$, using the module property (\ref{e:module}) we have
\begin{align*}\theta(\rho_0)&=\lim_i\theta(\rho_0\star\om_{\eta_i})\\
&=\lim_i\rho_0\star\theta(\om_{\eta_i})+\theta(\om_{\eta_i}\bullet\rho_0)-\theta(\om_{\eta_i})\bullet\rho_0\\
&=\lim_i\rho_0\star\theta(\om_{\eta_i})-\theta(\om_{\eta_i})\bullet\rho_0.
\end{align*}
Then for any $\tau\in\TCQ$, (not necessarily in $\TCQ_0$), formulae (\ref{e:dr}) imply that
\begin{align*}\tau\star\theta(\rho_0)&=\lim_i\tau\star(\rho_0\star\theta(\om_{\eta_i}))-\tau\star(\theta(\om_{\eta_i})\bullet\rho_0)\\
&=\lim_i\tau\star\rho_0\star\theta(\om_{\eta_i})\\
&=\lim_i(\tau\star\rho_0)\star\theta(\om_{\eta_i})-\theta(\om_{\eta_i})\bullet(\tau\star\rho_0)\\
&=\theta(\tau\star\rho_0)
\end{align*}
(the final equality following by above since $\tau\star\rho_0\in\TCQ_0$). Summarizing, we have 
$$\theta(\tau\star\rho_0)=\tau\star\theta(\rho_0), \ \ \ \tau\in\TCQ, \ \rho_0\in\TCQ_0.$$
In particular, if $i:\TCQ_0\hookrightarrow\TCQ$ is the (left $(\TCQ,\star)$-module) inclusion, the composition $\theta\circ i:\TCQ_0\to\TCQ$ is a left $(\TCQ,\star)$-module map. The adjoint $(\theta\circ i)^*=i^*\circ \theta^*:\BLTQ\to\BLTQ/\C1$ is then a right $(\TCQ,\star)$-module map. Let $m\in\la\LIQ\star\LOQ\ra^*$ be an invariant mean, and fix a state $\hat{f}\in\LOQH$. The map $\Theta^r(m):\BLTQ\to\BLTQ$ defined by
$$\la \Theta^r(m)(T),\rho\ra:=\la m,T\star\rho\ra, \ \ \ T\in\BLTQ, \ \rho\in\TCQ$$
is a right $(\TCQ,\star)$-module conditional expectation onto $\LIQH$ \cite[Theorem 4.2]{CN}. It follows that the state $\tilde{m}:=\hat{f}\circ \Theta^r(m)\in\BLTQ^*$ satisfies
$$\la \tilde{m},T\star\rho\ra=\rho(1)\la \tilde{m},T\ra,\ \ \ T\in\BLTQ, \ \rho\in\TCQ.$$ 
Viewing $\tilde{m}$ as right $(\TCQ,\star)$-module map $\tilde{m}:\BLTQ\to\C1$ (with range inside $\BLTQ$), it follows that $(\id-\tilde{m}):\BLTQ\to\BLTQ$ induces a well-defined right $(\TCQ,\star)$-module map $(\id-\tilde{m}):\BLTQ/\C1\to\BLTQ$. The composition $(\id-\tilde{m})\circ i^*\circ\theta^*:\BLTQ\to\BLTQ$ lies in $\mc{CB}_{\star}(\BLTQ)$ and satisfies 
$$(\id-\tilde{m})\circ i^*\circ\theta^*(T)=\theta^*(T)-\la\tilde{m},\theta^*(T)\ra1, \ \ \ T\in\BLTQ.$$
Since $\G$ is co-amenable, $\mc{CB}_{\star}(\BLTQ)=\Theta^r(\la\LIQ\star\LOQ\ra^*)$ \cite[Proposition 6.5]{HNR2}, so there is some $n\in\la\LIQ\star\LOQ\ra^*$ for which
$$\theta^*(T)=\Theta^r(n)(T)+\la\tilde{m},\theta^*(T)\ra1, \ \ \ T\in\BLTQ.$$
Decomposing $n=\mu+n_{\perp}$ with $\mu\in M(\G)$ and $n_{\perp}\in C_0(\G)^{\perp}$ \cite[Proposition 6.1]{HNR2}, as well as $\tilde{m}\circ\theta^*=\om+m_{\perp}$ with $\om\in\TCQ$ and $m_\perp\in\BLTQ^*\cap\mc{K}(L^2(\G))^{\perp}$ we have
$$\theta^*=(\Theta^r(\mu)+\om(\cdot)1)+(\Theta^r(n_{\perp})+m_{\perp}(\cdot)1).$$
Singularity of $m_{\perp}(\cdot)$ and of $\Theta^r(n_{\perp})$ \cite[Theorem 3.1]{HNR2}, together with normality of $\theta^*, \Theta^r(\mu)$ and $\om(\cdot)$ imply $\theta^*=\Theta^r(\mu)+\om(\cdot)1$. At the predual level, we see that
$$\theta(\rho)=\Theta^r(\mu)_*(\rho)+\la\rho,1\ra\om, \ \ \ \rho\in\TCQ.$$
Since $\theta$ and $\Theta^r(\mu)_*$ are left $\ostar$-module maps, we must have
$$0=\la\rho_0\ostar\rho,1\ra\om=\rho_0\ostar\om\la\rho,1\ra$$
for all $\rho_0\in\TCQ_0$ and $\rho\in\TCQ$, forcing $\om$ to satisfy $\rho_0\ostar\om=0$, equivalently,
$$\rho_0\star\om=\om\bullet\rho_0, \ \ \ \rho_0\in\TCQ_0.$$

The proof will be completed by observing that the $\ostar$-module property of $\theta$ forces $\om$ to either be zero when $\G$ is not compact, or in $\C\lambda(\vphi)$ when $\G$ is compact.

In any case, since $\wh{\G}$ is co-amenable, there exists a net $(\xi_j)$ of unit vectors in $\LTQ$ satisfying
\begin{enumerate}[label=(\alph*')]
\item $\norm{\wh{V}(\xi\ten\xi_j)-\xi\ten\xi_j}\to 0$, $\xi\in\LTQ$; 
\item $\norm{\om_{\xi_j}\bullet\rho-\rho\star\om_{\xi_j}}\to 0$, $\rho\in\TCQ$;
\item $\wh{J}\xi_j=\xi_j$.
\end{enumerate}
As in the proof of Theorem \ref{t:bai} let $\rho_{i,j}:=\om_{\xi_j}-\om_{\eta_i}\in\TCQ_0$. Since $2\rho_{i,j}\ostar\om=0$ for all $i,j$, $\om\bullet\om_{\xi_j}\to\om$ and $\om_{\eta_i}\star\om-\om\bullet\om_{\eta_i}\to 0$, we see that
$$0=\lim_j\lim_i2\rho_{i,j}\ostar\om=\lim_j \om_{\xi_j}\star\om-\om\bullet\om_{\xi_j},$$
implying $\om=\lim_j\om_{\xi_j}\star\om$. But $\om_{\xi_j}\star\om-\om(1)\om_{\xi_j}\to0$, so that $\om=\lim_j\om(1)\om_{\xi_j}$ in trace norm. 

Hence, either $\om\in\TCQ_0$, in which case $\om=0$ since $\TCQ_0$ has a bounded approximate identity, or $\om(1)\neq 0$. In the latter case, $\lim_j\om_{\xi_j}=\om(1)^{-1}\om\in\TCQ$. But then $\om(1)^{-1}\om$ is right identity for $(\TCQ,\bullet)$ and $\wh{\G}$ is necessarily discrete (equivalently, $\G$ is compact) \cite[Proposition 3.7]{KN}. Recalling the construction of $\xi_j$ (from Lemma \ref{l:jhat}), we know that each $\xi_j\in\mc{P}_{\hat{\vphi}}$, the (closed) cone associated to the left Haar weight on $\LIQH$. Since $\LIQH$ is standardly represented on $\LTQ$, the Powers--St\o rmer inequality \cite[Lemma 2.10]{H} holds for all $\xi,\eta\in \mc{P}_{\hat{\vphi}}$:
$$\norm{\xi-\eta}^2\leq\norm{\om_\xi-\om_\eta}\leq\norm{\xi-\eta}\norm{\xi+\eta}.$$
It follows that $(\xi_j)$ is Cauchy in $\LTQ$ so converges to some $\xi\in\mc{P}_{\hat{\vphi}}$. Then $\om(1)^{-1}\om=\om_\xi$ restricts to a right identity in $\LOQH$. But $\om_{\Lambda_{\hat{\vphi}}(\lambda(\vphi))}$ is a two-sided identity in $\LOQH$, so necessarily $\om_\xi|_{\LIQH}=\om_{\Lambda_{\hat{\vphi}}(\lambda(\vphi))}|_{\LIQH}$. By uniqueness of vector representatives in $\mc{P}_{\hat{\vphi}}$, we have $\xi=\Lambda_{\hat{\vphi}}(\lambda(\vphi))$, so that $\om(1)^{-1}\om=\om_{\Lambda_{\hat{\vphi}}(\lambda(\vphi))}$, which, viewed as a trace class operator, is nothing but $\lambda(\vphi)$. 

Regarding the dual statements for $\mc{CB}_{\ostar}(\TCQ)$, observe that
$$\mc{CB}_{\ostar}(\TCQ)=_{-\ostar^{op}}\mc{CB}(\TCQ),$$
so one can simply combine the above reasoning with the fact that $-\ostar^{op}$ is the dual product $\wh{\ostar}$ (by Remark \ref{r:TCQ_0}(2)). This completes the proof.
\end{proof}

\begin{remark} A consequence of the above proof is that for $\G$ and $\wh{\G}$ co-amenable, the left $(\TCQ_0,\ostar)$-module $\TCQ$ is faithful if and only if $\G$ is not compact.
\end{remark}

\begin{cor} Let $\G$ be a co-amenable locally compact quantum group such that $\wh{\G}$ is co-amenable and neither $\G$ nor $\h{\G}$ is compact. Then the maps
\begin{align*}\Theta&:\McbQr\ni\mu\mapsto\Theta^r(\mu)_*\in \ _{\ostar}\mc{CB}(\TCQ)\\
\wh{\Theta}&:\McbQHr\ni\hat{\mu}\mapsto\wh{\Theta}^r(\mu)_*\in \ \mc{CB}_{\ostar}(\TCQ).\end{align*} 
are completely isometric anti-isomorphisms of completely contractive Banach algebras.
\end{cor}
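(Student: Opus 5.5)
The plan is to derive the corollary directly from the preceding Proposition together with Theorem \ref{t:multipliers}. By the Proposition, $\Theta$ and $\wh{\Theta}$ are already completely isometric anti-representations of $\McbQr$ and $\McbQHr$ into $_{\ostar}\mc{CB}(\TCQ)$ and $\mc{CB}_{\ostar}(\TCQ)$, respectively. In particular they are injective, completely isometric, and anti-multiplicative. So the only thing left to prove is surjectivity onto the full module-map algebras.

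For $\Theta$, we are assuming that $\G$ and $\wh{\G}$ are co-amenable and that $\G$ is not compact. Theorem \ref{t:multipliers}(1) then gives $_{\ostar}\mc{CB}(\TCQ)=\Theta(\McbQr)$ exactly. For $\wh{\Theta}$, non-compactness of $\wh{\G}$ lets us apply Theorem \ref{t:multipliers}(3), which gives $\mc{CB}(\TCQ)_{\ostar}=\wh{\Theta}(\McbQHr)$.

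Two bookkeeping points remain. First, since $\G$ is co-amenable, $\McbQr=\rho(M(\G))$ by \cite[Theorem 4.2]{HNR2}. This justifies writing elements as measures $\mu$, and similarly $\hat{\mu}\in M(\wh{\G})$ on the dual side, since $\wh{\G}$ is co-amenable too. Second, we need $_{\ostar}\mc{CB}(\TCQ)$ and $\mc{CB}_{\ostar}(\TCQ)$ to be completely contractive Banach algebras under composition, so that "anti-isomorphism of completely contractive Banach algebras" makes sense. This holds because module maps are closed under composition and the cb-norm is submultiplicative, with the matrix norms inherited from $\mc{CB}(\TCQ)$.

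A bijective complete isometry that is anti-multiplicative is a completely isometric anti-isomorphism, which finishes the argument. There is no real obstacle here. The one point needing care is that parts (1) and (3) of Theorem \ref{t:multipliers} each use co-amenability of \emph{both} $\G$ and $\wh{\G}$: the proof uses the approximate-identity nets from both sides, through Theorem \ref{t:bai}. Both are hypotheses of the corollary, so each case applies.
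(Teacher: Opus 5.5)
Your proposal is correct and matches the paper's argument, which it leaves implicit. The preceding Proposition gives the completely isometric anti-representations. Parts (1) and (3) of Theorem \ref{t:multipliers} then give surjectivity onto ${}_{\ostar}\mc{CB}(\TCQ)$ and $\mc{CB}(\TCQ)_{\ostar}$, since both $\G$ and $\wh{\G}$ are assumed co-amenable and non-compact.
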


It would be natural to study the completely bounded multipliers of $(\TCQ_0,\ostar)$ in relation to $M_{cb}^r(\LOQ_0)$ and $M_{cb}^r(\LOQH_0)$. We postpone this and related Banach algebra properties of $(\TCQ_0,\ostar)$ to future work.

\section{Nuclear Operators on $L^p(G)$}\label{s:last}
 
Let $G$ be a locally compact group, $p\in (1,\infty)$ and let $\mc{N}(L^p(G))$ denote the nuclear operators on $L^p(G)$. In the second author's thesis \cite{N}, an analogue of the multiplication $\star_a$ on $\mc{T}(L^2(G))=\mc{N}(L^2(G))$ was defined on $\mc{N}(L^p(G))$ via
$$\om\star \tau=\int_G\rho_p(s^{-1})\om\rho_p(s)\pi(\tau)(s)ds, \ \ \ \om,\tau\in\mc{N}(L^p(G)),$$
where $\rho_p$ is the right regular representation on $L^p(G)$ and $\pi:\mc{N}(L^p(G))\to L^1(G)$ denotes pointwise multiplication under the standard identification $\mc{N}(L^p(G))=L^{p}(G)\pten L^{p'}(G)$, where $p'$ is the conjugate variable to $p$ and $\pten$ denotes the Banach space projective tensor product in this section.

There is a fundamental (invertible) isometry $V_p:L^p(G\times G)\to L^p(G\times G)$ given by
$$V_p\xi(s,t)=\Delta(t)^{1/p}\xi(st,t), \ \ \ \xi\in L^p(G\times G),$$
which induces a weak*-weak* continuous co-product $\Gamma^r:\mc{B}(L^p(G))\to\mc{B}(L^p(G\times G))$ via
$$\Gamma(T)=V_p(T\ten 1)V_p^{-1}, \ \ \ T\in\mc{B}(L^p(G)).$$
As observed in \cite[\S2.1]{DS}, $\mc{N}(L^p(G))\ten \mc{N}(L^p(G))$ sits canonically as a dense subspace of $\mc{N}(L^p(G\times G))$ via $(\xi\ten\xi')\ten(\eta\ten\eta')\mapsto (\xi\ten\eta)\ten(\xi'\ten\eta')$. Noting that $V_p^*=V^{-1}_{p'}$, a calculation similar to \cite[(2.1)]{DS} shows that the restriction of the pre-adjoint 
$$\Gamma_*|_{\mc{N}(L^p(G))\ten \mc{N}(L^p(G))}:\mc{N}(L^p(G))\ten \mc{N}(L^p(G))\ni \om\ten\tau\to \om\star\tau\in\mc{N}(L^p(G)).$$

The dual fundamental isometry $\wh{V_p}:L^P(G\times G)\to L^p(G\times G)$ satisfies
$$\wh{V_p}\xi(s,t)=\xi(s,s^{-1}t), \ \ \ \xi\in L^p(G\times G),$$
and coincides with $W_p^{-1}$ in the notation of \cite{DS}. Thus, as shown in \cite{DS}, 
$$\wh{\Gamma}:\mc{B}(L^p(G))\ni T\mapsto \wh{V_p}(T\ten 1)\wh{V_p}^{-1}\in\mc{B}(L^p(G\times G))$$
defines a co-product satisfying $\wh{\Gamma}(\lambda_p(s))=\lambda_p(s)\ten\lambda_p(s)$, $s\in G$, where $\lambda_p$ is the left regular representation on $L^p(G)$. Moreover,
$$\wh{\Gamma}_*|_{\mc{N}(L^p(G))\ten \mc{N}(L^p(G))}:\mc{N}(L^p(G))\ten \mc{N}(L^p(G))\ni \om\ten\tau\to \wh{\Gamma}_*(\om\ten\tau)\in\mc{N}(L^p(G))$$
defines an associative (Banach algebra) product on $\mc{N}(L^p(G))$, which we denote by $\bullet$. 

The dual isometries $V,\wh{V}$ (suppressing the $p$-subscript for notational clarity) satisfy the same commutation relation from the Hilbert space setting: for all $\xi\in L^p(G\times G\times G)$, 
\begin{align*} V_{12}\wh{V}_{13}\xi(r,s,t)&=\Delta(s)^{1/p}\wh{V}_{13}\xi(rs,s,t)\\
&=\Delta(s)^{1/p}\xi(rs,s,s^{-1}r^{-1}t)\\
&=\Delta(s)^{1/p}\wh{V}_{23}\xi(rs,s,r^{-1}t)\\
&=V_{12}\wh{V}_{23}\xi(r,s,r^{-1}t)\\
&=\wh{V}_{13}V_{12}\wh{V}_{23}\xi(r,s,t).
\end{align*}
That is, $V_{12}\wh{V}_{13}=\wh{V}_{13}V_{12}\wh{V}_{23}$. Then, under the canonical trivariate embedding
$$\mc{N}(L^p(G))\ten \mc{N}(L^p(G))\ten  \mc{N}(L^p(G))\subseteq \mc{N}(L^p(G\times G\times G)),$$
we have
\begin{align*}\la (\rho\star\om)\bullet\tau,T\ra&=\la\rho\star\om\ten\tau,\wh{V}(T\ten 1)\wh{V}^{-1}\ra\\
&=\la\rho\ten\om\ten\tau,V_{12}\wh{V}_{13}(T\ten 1\ten 1)\wh{V}_{13}^{-1}V_{12}^{-1}\ra\\
&=\la\rho\ten\om\ten\tau,\wh{V}_{13}V_{12}\wh{V}_{23}(T\ten 1\ten 1)\wh{V}_{23}^{-1}V_{12}^{-1}\wh{V}_{13}^{-1}\ra\\
&=\la\rho\ten\om\ten\tau,\wh{V}_{13}V_{12}(T\ten 1\ten 1)V_{12}^{-1}\wh{V}_{13}^{-1}\ra\\
&=\la\rho\ten\tau\ten\om,\wh{V}_{12}V_{13}(T\ten 1\ten 1)V_{13}^{-1}\wh{V}_{12}^{-1}\ra\\
&=\la(\rho\bullet\tau)\ten\om,V(T\ten 1)V^{-1}\ra\\
&=\la(\rho\bullet\tau)\star\om,T\ra,
\end{align*}
for all $\rho,\om,\tau\in\mc{N}(L^p(G))$, $T\in\mc{B}(L^p(G))$. In other words, condition (2) of Proposition \ref{p:general} is satisfied. Restricting both products to the subspace
$$\mc{N}_{0}(L^p(G)):=\{\om\in\mc{N}(L^p(G))\mid \la\om,1\ra=0\},$$
condition (1) of Proposition \ref{p:general} is also valid. Indeed, the commutation relation
$$\wh{V}_{23}V_{12}\xi(r,s,t)=V_{12}\xi(r,s,s^{-1}t)=\Delta(s)^{1/p}\xi(rs,s,s^{-1}t)=V_{12}\wh{V}_{23}\xi(r,s,t), \ \ \ \xi\in L^p(G\times G\times G),$$
implies that
\begin{align*}\la\rho\star(\om\bullet\tau),T\ra&=\la\rho\ten(\om\bullet\tau),V(T\ten 1)V^{-1}\ra\\
&=\la\rho\ten(\om\bullet\tau),V(T\ten 1)V^{-1}\ra\\
&=\la\rho\ten\om\ten\tau,\wh{V}_{23}V_{12}(T\ten 1\ten 1)V_{12}^{-1}\wh{V}_{23}^{-1}\ra\\
&=\la\rho\ten\om\ten\tau,V_{12}\wh{V}_{23}(T\ten 1\ten 1)\wh{V}_{23}^{-1}V_{12}^{-1}\ra\\
&=\la\rho\ten\om\ten\tau,V_{12}(T\ten 1\ten 1)V_{12}^{-1}\ra\\
&=\la\rho\ten\om\ten\tau,\Gamma(T)\ten 1\ra\\
&=\tau(1)\la\rho\star\om,T\ra\\
&=0
\end{align*}
for all $\rho,\om,\tau\in\mc{N}_{0}(L^p(G))$ and $T\in\mc{B}(L^p(G))$. Proposition \ref{p:general} then shows that the products 
\begin{align*}\rho\ostar \om&=\frac{1}{2}(\rho\star\om -\om\bullet \rho)\\
\rho\ostar^+\om&=\frac{1}{2}(\rho\star \om+\om\bullet \rho)
\end{align*}
define associate Banach algebra products on $\mc{N}_{0}(L^p(G))$.

\section{Outlook}

A number of natural lines of investigation are suggested by our work, which we summarize below.
\begin{enumerate}
\item The dual co-products outlined in the previous section, in combination with \cite{DS} and the developing theory of $L^p$-operator algebras (see, e.g., \cite{GT1,GT2,Ph,PV}) suggest a potential extension of quantum group theory to the setting of $L^p$-spaces. Indeed, as in the case $p=2$, the commutation relation between the dual fundamental isometries $V_{12}\wh{V}_{13}=\wh{V}_{13}V_{12}\wh{V}_{23}$ may be viewed as a type of pentagonal relation. 
\item When is $(\TCQ_0,\ostar)$ (operator) amenable? Even for finite (quantum) groups, can one calculate the amenability constants? See \cite{Choi,Choi2} for recent results on amenability constants of Fourier algebras of finite groups.
\item In line with the previous question, how are homological properties (e.g., flatness, projectivity) of $(\TCQ_0,\ostar)$ related to properties of $\G$? 
\item Determine the centre $\mc{Z}(\TCQ_0,\ostar))$, even for finite quantum groups.
\end{enumerate}

\section*{Acknowledgements}
Jason Crann was partially supported by NSERC Discovery Grant RGPIN-2023-05133 and 
Matthias Neufang by NSERC Discovery Grant RGPIN-2014-06356. This work was finalized while Jason Crann was in residence at Institut Mittag-Leffler in Djursholm, Sweden, during the Operator Algebras and Quantum Information Program 2026. He is grateful for the hospitality and support from the Swedish Research Council under grant no. 2021-06594.


\end{spacing}

\end{document}